\newtheorem{theorem}{Theorem}[section]
\newtheorem{lemma}[theorem]{Lemma}
\begin{document}

\title[Andrews Style Identities]{Andrews Style Partition Identities}

\author[Kur\c{s}ung\"{o}z]{Ka\u{g}an Kur\c{s}ung\"{o}z}
\address{Faculty of Engineering and Natural Sciences, Sabanc{\i} University, \.{I}stanbul, Turkey}
\email{kursungoz@sabanciuniv.edu}

\subjclass[2010]{Primary 05A15, 05A17, 11P84, Secondary 05A19}

\keywords{Integer Partition, Rogers-Ramanujan-Gordon Identities}

\date{\today}

\begin{abstract}
\noindent
We propose a method to construct a variety of partition identities at once.  
The main application is an all-moduli generalization of some of Andrews' results in \cite{AndrewsParity}.  
The novelty is that the method constructs solutions to functional equations 
which are satisfied by the generating functions.  
In contrast, the conventional approach is to show that a variant of well-known series 
satisfies the system of functional equations, 
thus reconciling two separate lines of computations.  
\end{abstract}

\maketitle


\section{Introduction}
\label{secIntro}

Rogers-Ramanujan identities is a milestone in integer partition theory.  
They are independently discovered by three mathematicians 
(Rogers~\cite{Rogers}, Ramanujan, in one of his letters to Hardy before 1913~\cite{Hardy}, Schur~\cite{Schur}).  
For purposes of this note, we present one of them in Schur's terminology.  

\begin{theorem}
\label{thRR1}
  Let $A(n)$ denote the number of partitions of $n$ into parts 
  that are $\not\equiv 0, \pm 2 \pmod{5}$.  
  Let $B(n)$ denote the number of partitions of $n$
  into distinct and non-consecutive parts.  
  Then $A(n) = B(n)$.  
\end{theorem}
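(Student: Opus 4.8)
The plan is to pass to generating functions and reduce the statement to the analytic Rogers--Ramanujan identity. Write, as usual, $(a;q)_n=\prod_{j=0}^{n-1}(1-aq^j)$ and $(a;q)_\infty=\prod_{j\ge 0}(1-aq^j)$. A partition counted by $A(n)$ uses only parts $\equiv 1,4\pmod 5$, so
\[
\sum_{n\ge 0}A(n)\,q^n=\prod_{j\ge 1}\frac{1}{(1-q^{5j-4})(1-q^{5j-1})}=\frac{1}{(q;q^5)_\infty\,(q^4;q^5)_\infty}.
\]
For the other side I would strip off the minimal instance: a partition $\lambda_1>\lambda_2>\cdots>\lambda_n\ge 1$ with $\lambda_i-\lambda_{i+1}\ge 2$ corresponds, via $\lambda_i=\mu_i+(2(n-i)+1)$, to a partition $\mu_1\ge\mu_2\ge\cdots\ge\mu_n\ge 0$ with at most $n$ parts, and $|\lambda|=|\mu|+n^2$ because $1+3+\cdots+(2n-1)=n^2$. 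Since such $\mu$ are generated by $1/(q;q)_n$, summing over $n$ gives
\[
\sum_{n\ge 0}B(n)\,q^n=\sum_{n\ge 0}\frac{q^{n^2}}{(q;q)_n}.
\]

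After these two reductions the theorem is equivalent to the identity $\sum_{n\ge 0}q^{n^2}/(q;q)_n=1/\big((q;q^5)_\infty(q^4;q^5)_\infty\big)$. To prove it I would introduce the two-variable refinement $F(z)=\sum_{n\ge 0}z^nq^{n^2}/(q;q)_n$ and verify, by comparing coefficients of $z^n$, that it satisfies the $q$-difference equation $F(z)=F(zq)+zq\,F(zq^2)$ with $F(0)=1$; the only algebraic input is $q^n/(q;q)_n+1/(q;q)_{n-1}=1/(q;q)_n$. Iterating this relation, multiplying through by $(zq;q)_\infty$, and specializing to $z=1$ reduces the claim to its symmetric (Schur) form
\[
(q;q)_\infty\sum_{n\ge 0}\frac{q^{n^2}}{(q;q)_n}=\sum_{n=-\infty}^{\infty}(-1)^n q^{n(5n-1)/2},
\]
whose right side equals $(q^2;q^5)_\infty(q^3;q^5)_\infty(q^5;q^5)_\infty$ by the Jacobi triple product identity; dividing by $(q;q)_\infty=(q;q^5)_\infty(q^2;q^5)_\infty(q^3;q^5)_\infty(q^4;q^5)_\infty(q^5;q^5)_\infty$ then returns the product on the $A$-side.

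The one genuinely hard step is producing that theta series from the functional equation — this is where the modulus $5$ and the exponents $\pm 1$ actually emerge, and no rearrangement dodges it (Schur's recursion, Watson's transformation, or a Bailey-pair argument all do the same work). I would most likely carry it out through a finite polynomial version: let $d_n(q)$ be the degree-$n$ truncation of $F$, show $d_n$ obeys a three-term recurrence mirroring the $q$-difference equation, prove by induction that $d_n$ coincides with an explicit, theta-like finite sum of Gaussian binomials, and then let $n\to\infty$. Everything before that step — the two generating-function reductions — is routine bookkeeping, and although a direct bijection between the two families of partitions is known, it is far more delicate and I would not pursue it here.
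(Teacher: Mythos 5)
Your outline is correct, and it is the classical (Rogers/Schur) route --- but it is not the route this paper takes; in fact the paper never proves Theorem \ref{thRR1} on its own, citing it as a known milestone and recovering it only as the special case $d=1$, $k=a=2$ of Theorem \ref{thmOddEvenFull}. The methodological difference is worth spelling out. You reduce the $B$-side to the single sum $\sum_{n\ge 0} q^{n^2}/(q;q)_n$ via the staircase substitution and then must extract the modulus-$5$ theta series from the $q$-difference equation $F(z)=F(zq)+zq\,F(zq^2)$ --- the step you rightly flag as the only hard one, and the one your write-up leaves as a sketch (Schur's polynomial recursion, a Bailey pair, or Watson's transformation is genuinely needed there, so your proof is not yet self-contained at that point). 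The paper's machinery sidesteps that extraction entirely: it never passes through $\sum q^{n^2}/(q;q)_n$, but instead posits a two-variable series of Andrews' $Q_{k,a}$ type,
\[
  \sum_{n \geq 0} \alpha_n(x;q)\,(q^{-n})^a + \beta_n(x;q)\,(xq^{n+1})^a,
\]
whose coefficients already carry the quadratic exponent $q^{(2k+1)\binom{n+1}{2}}$, verifies termwise (an elementary, finite check) that it satisfies the same functional equations and initial conditions as the combinatorial generating function, and then observes that at $x=1$ the two halves of the sum reassemble into a bilateral theta series to which Jacobi's triple product applies immediately. What your approach buys is the celebrated sum side of the identity and a proof independent of guessing any ansatz; what the paper's approach buys is that the ``hard step'' degenerates into routine verification --- which is precisely the methodological point the paper is advertising. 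If you want to keep your route, the remaining work is to carry out in full the induction on the truncated polynomials $d_n(q)$ and justify the passage to the limit.
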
 

The condition imposed by $A(n)$ is called a divisibility condition, 
and that by $B(n)$ a multiplicity condition.  
Following the $q$-series notation in \cite{GR}, 
one can write a generating function 
for partitions enumerated by $A(n)$ easily, and the theorem becomes

\begin{equation*}
  \sum_{n \geq 0} B(n)q^n = \frac{(q^2, q^3, q^5; q^5)_\infty}{(q; q)_\infty}
  \textrm{.  }
\end{equation*}
Above and elsewhere, 
\[
  (a; q)_n = (1 - a)(1 - aq) \cdots (1 - aq^{n-1}), 
\]
\[
  (a_1, \ldots, a_r; q)_n = (a_1; q)_n \cdots (a_r; q)_n, 
\]
\[
  (a; q)_\infty = \lim_{n \to \infty} (a; q)_n.  
\]
The infinite products converge absolutely for $\vert q \vert < 1$.  

It was not until 1960 did Gordon give an extension to Rogers-Ramanujan identities
where the multiplicity conditions involved consecutive pairs only, 
but not consecutive triples, or parts that are two or more apart~\cite{RRG}.  
One in Gordon's family of identities is the following.  

\begin{theorem}
\label{thRRG1}
  Let $A_{k, a}(n)$ denote the number of partitions of $n$ into parts 
  that are $\not\equiv 0, \pm a \pmod{2k+1}$.  
  Let $B_{k, a}(n)$ denote the number of partitions of $n$
  where the combined number of occurrences of a consecutive pair of parts 
  is at most $k-1$, and $1$ occurs at most $a-1$ times.  
  Then $A_{k, a}(n) = B_{k, a}(n)$.  
\end{theorem}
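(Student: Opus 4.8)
The plan is to prove Theorem~\ref{thRRG1} by the $q$-difference equation method. First I would translate the multiplicity condition into a difference condition: writing a partition enumerated by $B_{k,a}(n)$ in nonincreasing order as $b_1 \geq b_2 \geq \cdots \geq b_m \geq 1$, the requirement that the combined number of occurrences of any pair of consecutive integers be at most $k-1$ is equivalent to $b_j - b_{j+k-1} \geq 2$ for all admissible $j$, and the condition ``$1$ occurs at most $a-1$ times'' becomes $b_{m-a+1} \geq 2$ whenever $m \geq a$. I would then introduce the two-variable generating functions
\[
  D_{k,a}(x) = D_{k,a}(x;q) = \sum_{m, n \geq 0} d_{k,a}(m,n)\, x^m q^n \qquad (1 \leq a \leq k),
\]
where $d_{k,a}(m,n)$ counts the partitions of $n$ into $m$ parts satisfying the above difference condition for $k$ and having at most $a-1$ parts equal to $1$; the exponent of $x$ records the number of parts and that of $q$ the integer being partitioned.

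The core of the argument is the derivation of a system of functional equations for the $D_{k,a}(x)$. Splitting the partitions counted by $D_{k,a}(x)$ according to whether they contain at most $a-2$ ones or exactly $a-1$ ones, and in the latter case deleting those ones, subtracting $1$ from every remaining part and carefully tracking how the difference condition transforms, one is led to relations of the shape
\[
  D_{k,a}(x) - D_{k,a-1}(x) = (xq)^{a-1}\, D_{k,k+1-a}(xq), \qquad 2 \leq a \leq k,
\]
together with the boundary identity $D_{k,1}(x) = D_{k,k}(xq)$ and the normalization $D_{k,a}(0) = 1$. A routine induction on the power of $x$ shows that this system has a unique solution in $\mathbb{Z}[[q]][[x]]$. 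To identify it, I would write down the natural two-variable lift of the product side, namely an explicit bilateral theta series divided by $(xq;q)_\infty$, and verify by splitting the bilateral sum, reflecting $n \mapsto -n$ in one half, and repeatedly using $(xq;q)_\infty = (1 - xq)(xq^2;q)_\infty$, that this candidate satisfies the same recurrences and normalization; uniqueness then forces it to equal $D_{k,a}(x)$.

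It remains to specialize $x = 1$. On the one hand $\sum_{n \geq 0} B_{k,a}(n) q^n = D_{k,a}(1;q)$. On the other hand, Jacobi's triple product identity collapses the bilateral theta series at $x = 1$ to $(q^a, q^{2k+1-a}, q^{2k+1}; q^{2k+1})_\infty$, so that
\[
  \sum_{n \geq 0} B_{k,a}(n) q^n \;=\; \frac{(q^a,\, q^{2k+1-a},\, q^{2k+1}; q^{2k+1})_\infty}{(q;q)_\infty}
  \;=\; \sum_{n \geq 0} A_{k,a}(n) q^n,
\]
where the last equality is the elementary product expansion of the generating function for partitions into parts $\not\equiv 0, \pm a \pmod{2k+1}$. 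Comparing coefficients of $q^n$ yields $A_{k,a}(n) = B_{k,a}(n)$; for $k = a = 2$ this recovers Theorem~\ref{thRR1}.

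The step I expect to be the main obstacle is the combinatorial derivation of the $q$-difference equations: one must check that after deleting the ones and subtracting $1$ from the surviving parts the resulting partition satisfies \emph{exactly} the difference condition for $k$ with the correct cap on the number of new ones, and that the degenerate cases (partitions with fewer than $a$ parts, and the empty partition) are handled consistently. A secondary difficulty is the verification that the explicit theta candidate solves the system, which is not purely mechanical, since it requires the reflection in the bilateral sum together with the $x \mapsto xq$ shift formula for $(xq;q)_\infty$.
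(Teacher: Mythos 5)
Your overall plan is the right one, and it is in fact the route this paper takes: Theorem~\ref{thRRG1} is recovered as the $d=1$ case of Lemma~\ref{lmOddEvenFull} and Theorem~\ref{thmOddEvenFull}, and Section~\ref{secConstruction} recalls exactly the system you derive, namely $Q_{k,a}(x;q)-Q_{k,a-1}(x;q)=(xq)^{a-1}Q_{k,k-a+1}(xq;q)$ with $Q_{k,0}(x;q)=0$ and $Q_{k,a}(0;q)=1$. Your combinatorial derivation of the recurrences (strip the $a-1$ ones, subtract $1$ from every surviving part, track the new cap $k-a+1$ on the ones) and the uniqueness-by-induction-on-the-power-of-$x$ argument are both sound.

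The gap is in your candidate solution. A bilateral theta series divided by $(xq;q)_\infty$ does \emph{not} solve this system for general $x$; the theta quotient only emerges after setting $x=1$. Concretely, for $k=a=2$ the unique solution is $D_{2,2}(x;q)=\sum_{n\geq 0}x^nq^{n^2}/(q;q)_n$, whose coefficient of $x^2$ is $q^4/\big((1-q)(1-q^2)\big)$, beginning with $q^4$; whereas the natural lift $\frac{1}{(xq;q)_\infty}\sum_{j}(-1)^jx^{2j}q^{5\binom{j}{2}+2j}$ is not even a power series in $x$ (its $j=-1$ term is $-x^{-2}q^3$), and its one-sided symmetrization yields $[x^2]=(q^3+q^4-q^5)/\big((1-q)(1-q^2)\big)$, which has a spurious $q^3$ term. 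The correct candidate is Andrews' series $Q_{k,a}(x;q)=\sum_{n\geq 0}\alpha_n(x;q)(q^{-n})^a+\beta_n(x;q)(xq^{n+1})^a$ with $\alpha_n(x;q)=-\beta_n(x;q)=(-1)^nx^{kn}q^{(2k+1)\binom{n+1}{2}}/\big((q;q)_n(xq^{n+1};q)_\infty\big)$: the denominators depend on the summation index $n$, which is exactly what makes the key steps \eqref{eqDeltaAlphaOriginal} and \eqref{eqDeltaBetaOriginal} work, and only at $x=1$, where $(q;q)_n(q^{n+1};q)_\infty=(q;q)_\infty$ pulls out of the sum, does the reflection $n\mapsto -n$ assemble a genuine bilateral theta series to which Jacobi's triple product applies. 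With this corrected candidate the rest of your argument goes through; as written, the verification step you flag as a ``secondary difficulty'' would in fact fail.
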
 
As above, we can express the theorem as 
\begin{equation*}
  \sum_{n \geq 0} B_{k, a}(n)q^n = \frac{(q^a, q^{2k+1-a}, q^{2k+1}; q^{2k+1})_\infty}{(q; q)_\infty}
  \textrm{.  }
\end{equation*}
Notice that $k = a = 2$ yields the first of the Rogers-Ramanujan identities.  

Recently, Andrews revisited the problem \cite{AndrewsParity}, 
and among the generalizations he gave was the following.  

\begin{theorem}
\label{thAndrewsParity1}
  For $a \equiv k \pmod{2}$, 
  let $W_{k, a}(n)$ denote the number of partitions of $n$ 
  subject to Gordon's multiplicity restriction (Theorem \ref{thRRG1}) 
  and even parts appear an even number of times.  
  Then
  \begin{equation*}
    \sum_{n \geq 0} W_{k, a}(n) q^n 
    = \frac{(-q; q^2)_\infty (q^a, q^{2k+2-a}, q^{2k+2}; q^{2k+2})_\infty}{(q^2; q^2)_\infty}.  
  \end{equation*}
\end{theorem}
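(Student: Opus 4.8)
\emph{Proof proposal.} I would stay inside the generating‑function framework of Gordon, Andrews and Bressoud, but—following the note's stated strategy—\emph{construct} the generating function for the partitions counted by $W_{k,a}(n)$ directly, and only at the very end read off the product by a single appeal to the Jacobi triple product, rather than matching a combinatorial computation against an independent $q$‑series computation. The first step is bookkeeping: encode an admissible partition $\lambda$ by multiplicities, letting $o_i$ be the number of occurrences of the odd part $2i-1$ and $2m_i$ that of the even part $2i$ (so $m_i\ge 0$; writing the even multiplicities as $2m_i$ \emph{is} the parity hypothesis). Because consecutive integers have opposite parity, every ``consecutive pair of parts'' in Gordon's restriction consists of one odd part and the adjacent even part, so that restriction becomes the local system $o_i+2m_i\le k-1$ and $2m_i+o_{i+1}\le k-1$ for all $i\ge 1$, together with $o_1\le a-1$; and $|\lambda|=\sum_i(2i-1)o_i+4\sum_i i\,m_i$. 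Put $\mathcal W_{k,a}(x)=\mathcal W_{k,a}(x;q)=\sum_\lambda x^{\ell(\lambda)}q^{|\lambda|}$ over admissible $\lambda$, with $\ell(\lambda)=\sum_i(o_i+2m_i)$.

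Next I would derive a system of $q$‑difference equations for the family $\{\mathcal W_{k,a}\}$. The move that drives the ordinary Rogers--Ramanujan--Gordon recursion—delete the $1$'s and subtract $1$ from every remaining part—is not available here, because subtracting $1$ swaps parities and ruins the evenness of the even multiplicities. The fix is to delete the parts equal to $1$ and to $2$ together and then subtract $2$ from every surviving part: this keeps odd parts odd and even parts even, and carries an admissible partition with smallest part $\ge 3$ back into the class at the cost of $x\mapsto xq^2$, which is where the modulus jumps from $2k+1$ to $2k+2$. Conditioning on the pair $(o_1,m_1)$—the admissible range of $o_1$ depends on $m_1$ through $o_1\le\min(a-1,k-1-2m_1)$—yields, for each $a$, an expansion of $\mathcal W_{k,a}(x)$ as a finite combination of the $\mathcal W_{k,b}(xq^2)$; forming the differences $\mathcal W_{k,a}(x)-\mathcal W_{k,a-2}(x)$ (note that $a\equiv k\pmod 2$, so stepping $a$ by $2$ is the natural move) should collapse these into a short closed system anchored at the bottom index, and since $\mathcal W_{k,a}(0;q)=1$ and every application of a relation strictly raises the $q$‑order of the remainder, the system has a unique solution in $\mathbb Z[x][[q]]$.

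It then remains to exhibit that solution and to simplify it. Organizing admissible $\lambda$ by the staircase $n_j=\sum_{i\ge j}(o_i+2m_i)$—recording level by level how many parts lie above each threshold and how they split between the odd and the even value there—should present $\mathcal W_{k,a}(x;q)$ as an Andrews--Gordon‑type multisum with denominators $(q^2;q^2)_{n_1-n_2}\cdots(q^2;q^2)_{n_{k-1}}$ (appending to the even parts happens in pairs, hence the base $q^2$) and an auxiliary $(-q;q^2)$‑type factor coming from the unrestricted parity of the odd‑part multiplicities, over a quadratic form in $n_1,\dots,n_{k-1}$; one then checks—this is a finite, routine telescoping—that it solves the system above, and, being the unique solution, it equals $\mathcal W_{k,a}$. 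Finally set $x=1$: the chain of $(q^2;q^2)$‑denominators together with the $(-q;q^2)$‑factor sums, by the $q$‑binomial theorem and Euler's identities, to $(-q;q^2)_\infty/(q^2;q^2)_\infty$, while the residual theta series $\sum_{n\in\mathbb Z}(-1)^n q^{(k+1)n^2+(a-k-1)n}$ is, by the Jacobi triple product identity \cite{GR}, precisely $(q^a,q^{2k+2-a},q^{2k+2};q^{2k+2})_\infty$—the asserted right‑hand side.

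The step I expect to be the real obstacle is the middle one: pinning down a system of functional equations that is simultaneously finite, manifestly uniquely solvable, and faithful to the evenness constraint. Because the parity condition destroys the one‑step reduction, the honest expansions coming from the dissection above involve \emph{several} of the $\mathcal W_{k,b}(xq^2)$ at once, and one may be forced to carry an auxiliary family of series that remembers the parity of the current smallest part; reconciling ``finite and closed'' with ``uniquely determined'' in the presence of that extra bookkeeping—and matching the resulting solution to the correct multisum—is the crux, after which everything is mechanical verification.
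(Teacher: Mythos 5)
Your proposal has the right overall architecture (multiplicity encoding, a $q$-difference system with a unique solution, Jacobi triple product at the end), but the two load-bearing steps are both left open, and one of them is resolved in the paper by exactly the device you consider and then discard. You reject the one-step reduction (delete the $1$'s, subtract $1$ from every remaining part) because it swaps parities and "ruins the evenness of the even multiplicities." The paper keeps that move and absorbs the parity swap by introducing a \emph{companion family} of counts in which the divisibility condition sits on the odd parts rather than the even parts; the reduction then carries the even-condition family to the odd-condition family and back (recurrences \eqref{eqEvenToOddRecurrence} and \eqref{eqOddToEvenRecurrence}). Each functional equation is a single difference $\mathcal{B}_{k,a}-\mathcal{B}_{k,a-1}$ with exactly one term on the right-hand side, so the system is finite, closed, and manifestly uniquely solvable. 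Your substitute --- delete the $1$'s and $2$'s together, subtract $2$, and condition on $(o_1,m_1)$ --- expresses each $\mathcal{W}_{k,a}(x)$ as a sum over \emph{many} $\mathcal{W}_{k,b}(xq^2)$, and you give no argument that differencing in steps of $2$ collapses this to a closed system; you yourself name this the "real obstacle." That obstacle is the content of the proof, not a detail to be deferred.

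The second gap is the candidate solution. The paper's series is a \emph{single} sum $\sum_{n\ge 0}\alpha_n(x;q)(q^{-n})^{a}+\beta_n(x;q)(xq^{n+1})^{a}$ with $\alpha_n$, $\beta_n$ explicit infinite products; at $x=1$ the relation $\alpha_n=-\beta_n$ lets the two halves reassemble into one bilateral theta series, to which Jacobi's triple product applies in one line. You instead posit a $(k-1)$-fold Andrews--Gordon-type multisum with $(q^2;q^2)$ denominators and assert that at $x=1$ it reduces to a theta series over a product "by the $q$-binomial theorem and Euler's identities." That reduction is not routine: for a genuine Andrews--Gordon multisum it is essentially the analytic Andrews--Gordon identity (a Bailey-chain computation), i.e., the hard half of the theorem you are trying to prove. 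You neither write down the quadratic form nor the verification that the multisum satisfies your (unspecified) system. So as it stands the argument is a plausible program rather than a proof; the paper's single-sum ansatz with a parity-swapping companion family is what makes both the functional equations and the final summation tractable.
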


It is possible to interpret the infinite product on the right hand side 
in terms of partitions \cite{AndrewsParity}.  
Andrews topped the paper off with a list of open questions, 
some of which have been solved by various authors.  
For instance, a missing case in the group of results Theorem \ref{thAndrewsParity1} belong 
was supplied by Kim and Yee \cite{KimYee}. 
They also gave extensions of Andrews' results 
for higher moduli instead of parity.  

The aim of this paper is to develop a computational method to produce 
similar partition identities.  
The conventional way of proving many partition identities is 
finding a system of functional equations first.  
Those equations are satisfied by the generating function of some class of partitions.  
Then, a twist of a well-known series (almost always Andrews' $J$-function~\cite[Ch. 7]{AndrewsBlueBook})
is shown to satisfy the same set of equations, hence an identity is obtained.  
The proposed method is linear in the sense that once the functional equations are written, 
their solutions are constructed from scratch, 
so the second parts of proofs are not independent computations.  

The method is a further exploitation of the ideas Andrews presented in~\cite{AndrewsPosetRR}.  
As the main application, we will give another all-moduli generalization to Andrews' aforementioned identities.  
The method will also give a unified proof to some of Andrews' parity results~\cite{AndrewsParity}, and 
Kim and Yee's addendum~\cite{KimYee}.  
After some preliminaries in section \ref{secPrelim}, 
we will obtain infinitely many families of unusual identities in section \ref{secMain}.  
Although the proofs in section \ref{secMain} are complete, 
their actual mechanisms will be reverse-engineered in section \ref{secConstruction}, 
which is the point of this paper.  
We conclude with further topics of research and open problems in section \ref{secFutureResearch}.  

\section{Preliminaries}
\label{secPrelim}

A partition of a non-negative integer $n$ is a non-increasing sequence of positive integers
the sum of which is $n$.  
\[
  5 + 3 + 2 + 2 + 1
\]
is a partition of 13 into 5 parts.  
Another way to represent partitions is the frequency notation.  
For all $i>0$, $f_i$ is the number of occurrences of $i$ in the given partition.  
In the above example, 
\[
  f_1 = 1, f_2 = 2, f_3 = 1, f_4 = 0, f_5 = 1, \textrm{ and } f_i = 0 \textrm{ for } i > 5.  
\]
Frequencies with negative indices are zero by definition.  
With the frequency notation, it becomes very convenient to express the 
class of theorems in section \ref{secIntro} \cite[ch. 7]{AndrewsBlueBook}.  

\begin{theorem}[Gordon~\cite{RRG}]
\label{thmRRGfull}
  Let $a, k, i, n$ be non-negative integers such that $1 \leq a \leq k$.  
  If $B_{k, a}(n)$ be the number of partitions of $n$ where 
  \[
    f_i + f_{i+1} < k \textrm{, and } f_1 < a, 
  \]
  then 
  \begin{equation*}
    \sum_{n \geq 0} B_{k, a}(n)q^n = \frac{(q^a, q^{2k+1 - a}, q^{2k+1}; q^{2k+1})_\infty}{(q; q)_\infty}
    \textrm{.  }
  \end{equation*}
\end{theorem}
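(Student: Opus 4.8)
The plan is to set up the classical Andrews--Gordon system of functional equations for the two-variable generating functions and then identify the solution with the infinite product. For $1 \le a \le k$, define
\[
  J_{k,a}(x; q) = \sum_{n \ge 0} \Bigl( \sum_{\text{partitions of } n \text{ with } f_i + f_{i+1} < k,\ f_1 < a} x^{(\text{number of parts})} \Bigr) q^n,
\]
so that the sought sum is $J_{k,a}(1; q)$. First I would derive, by a combinatorial argument on the possible values of $f_1$ (the multiplicity of the smallest part), the recurrences
\[
  J_{k,a}(x; q) - J_{k, a-1}(x; q) = (xq)^{a-1} J_{k, k-a+1}(xq; q), \qquad 2 \le a \le k,
\]
together with the boundary relation $J_{k,0}(x;q) = 0$ (equivalently $J_{k,1}$ and $J_{k,k}$ relations coming from shifting all parts down by one). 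The standard manipulation here is: classifying partitions counted by $J_{k,a}$ according to whether $f_1 < a-1$ or $f_1 = a-1$; in the latter case, removing the $a-1$ copies of $1$ and subtracting $1$ from every remaining part converts the condition $f_1 + f_2 < k$ into $f_1 < k - (a-1)$, which is exactly the defining condition of $J_{k, k-a+1}$ evaluated at $xq$.

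Next I would invoke Andrews' $J$-function from \cite[Ch.~7]{AndrewsBlueBook}: the series
\[
  \widetilde{J}_{k,a}(x; q) = \sum_{n \ge 0} \frac{(-1)^n x^{kn} q^{(2k+1)\binom{n}{2} + (k - a + 1) n} (1 - x^a q^{2an} \cdot (\cdots))}{(q;q)_n \, (x q^{n+1}; q)_\infty} \quad \text{(precise form as in the blue book)}
\]
satisfies the very same system of $q$-difference equations and the same boundary conditions, and that the system has a unique solution in the ring of formal power series in $x$ with coefficients in $\mathbb{Q}[[q]]$ (uniqueness is an easy induction on the $x$-degree once one normalizes $J_{k,a}(0;q) = 1$). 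Hence $J_{k,a}(x;q) = \widetilde{J}_{k,a}(x;q)$. Finally, specializing $x = 1$ and applying Jacobi's triple product identity to the resulting bilateral-type sum collapses $\widetilde{J}_{k,a}(1;q)$ to
\[
  \frac{(q^a, q^{2k+1-a}, q^{2k+1}; q^{2k+1})_\infty}{(q;q)_\infty},
\]
which is the claimed formula. (The equivalence of the frequency conditions $f_i + f_{i+1} < k$, $f_1 < a$ with the combined-occurrences phrasing of Theorem~\ref{thRRG1} is immediate and needs no separate argument.)

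The main obstacle is the bookkeeping in deriving and solving the functional equations cleanly — in particular getting the shift $x \mapsto xq$ and the index reflection $a \mapsto k-a+1$ exactly right, and then checking that Andrews' series genuinely satisfies the same recurrences (this is the ``two separate lines of computation'' the paper's abstract alludes to). Everything after that — uniqueness of the solution and the triple-product specialization at $x=1$ — is routine. I should remark that this is precisely the conventional proof that the introduction contrasts with the paper's new constructive method; I include it here only for completeness, since the identity itself is classical and due to Gordon.
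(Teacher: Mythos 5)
Your proposal is correct and is essentially the proof the paper relies on: the recurrence $J_{k,a}(x;q)-J_{k,a-1}(x;q)=(xq)^{a-1}J_{k,k-a+1}(xq;q)$ with $J_{k,0}=0$ and $J_{k,a}(0;q)=1$ is exactly the $d=1$ case of the system in Lemma~\ref{lmOddEvenFull}, the series you invoke is Andrews' $Q_{k,a}(x;q)$, which the paper identifies with its ${}_1C_{k+1,a+1}(x;q)$ in Section~\ref{secConstruction}, and the finish at $x=1$ via Jacobi's triple product matches the proof of Theorem~\ref{thmOddEvenFull}. The only difference is presentational: the paper \emph{constructs} the solution series from the functional equations rather than verifying a known series against them, but the logical content is the same.
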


\begin{theorem}[Andrews~\cite{AndrewsParity}]
\label{thmAndrewsOddEvenFull}
  Suppose $a, k, i, n$ are non-negative integers such that $1 \leq a \leq k$.  
  For $a \equiv k \pmod{2}$, 
  if $W_{k, a}(n)$ denote the number of partitions of $n$ such that 
  \[
    f_{i} + f_{i+1} < k, \quad f_1 < a \textrm{, and } 2 \vert f_{2i}, 
  \]
  then 
  \begin{equation*}
    \sum_{n \geq 0} W_{k, a}(n) q^n 
    = \frac{(-q; q^2)_\infty (q^a, q^{2k+2-a}, q^{2k+2}; q^{2k+2})_\infty}{(q^2; q^2)_\infty}.  
  \end{equation*}
  For $k$ odd and $a$ even, 
  if $\overline{W}_{k,a}(n)$ denotes the number of partitions of $n$ where 
  \[
    f_{i} + f_{i+1} < k, \quad f_1 < a \textrm{, and } 2 \vert f_{2i+1}, 
  \]
  then 
  \begin{equation*}
    \sum_{n \geq 0} \overline{W}_{k, a}(n) q^n 
    = \frac{(q^a, q^{2k+2-a}, q^{2k+2}; q^{2k+2})_\infty}{(-q; q^2)_\infty (q; q)_\infty}.  
  \end{equation*}
\end{theorem}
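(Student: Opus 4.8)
The plan is to take the classical route for Rogers--Ramanujan--Gordon type theorems and to handle both statements at once, with Gordon's Theorem~\ref{thmRRGfull} falling out as the parity-free special case: introduce a refined two-variable generating function, extract a finite system of $q$-difference equations by dissecting the partitions according to their smallest parts, solve that system by matching it against a suitable twist of Andrews' $J$-function at modulus $2k+2$, and finally recognise the infinite products on the right by the Jacobi triple product identity.

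First I would fix $k$ and, for each $b$ with $1 \le b \le k$, introduce
\[ G_{k,b}(x;q) \;=\; \sum_{\lambda} x^{\ell(\lambda)}\, q^{|\lambda|}, \]
the sum over partitions $\lambda$ with $f_i + f_{i+1} < k$, $f_1 < b$ and $2 \mid f_{2i}$, together with the companion $\overline{G}_{k,b}(x;q)$ defined with $2 \mid f_{2i+1}$ in place of $2 \mid f_{2i}$; here $\ell(\lambda)$ denotes the number of parts. The parameter $x$ is not cosmetic: it rigidifies the dissection so that the resulting $q$-difference system pins down $G_{k,b}$ uniquely once its value at $x = 0$ is fixed, whereas at $x = 1$ several of the equations degenerate.

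Next I would read off the functional equations. In the parity-free case one classifies a partition by $f_1$ and then deletes and shifts down the block of $1$'s; isolating the ``exactly $f_1 = b - 1$'' part yields the familiar Andrews--Gordon recursion, schematically $G_{k,b} - G_{k,b-1} = (xq)^{b-1}\, G_{k,k-b+1}(xq;q)$. The genuinely new phenomenon is that shifting every part down by $1$ carries ``even parts occur an even number of times'' to ``odd parts occur an even number of times'', i.e.\ it interchanges the class defining $G_{k,b}$ with the class defining $\overline{G}_{k,b}$; the remedy is to dissect on the pair $(f_1, f_2)$ and to shift down by $2$ instead. Now the parity restriction enters in an essential way: in the $G$-case the part $2$ forces $f_2$ to be even, so the recursion takes the form
\[ G_{k,b} - G_{k,b-1} \;=\; \sum_{\substack{0 \le f_2 \le k-b \\ f_2\ \mathrm{even}}} q^{\,b-1+2f_2}\, x^{\,b-1+f_2}\, G_{k,\,k-f_2}(xq^2;q), \]
and the indices $k - f_2$ that occur are precisely those $\equiv k \pmod 2$ --- which is exactly why the theorem requires $a \equiv k \pmod 2$. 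The $\overline{G}$-case runs in parallel, with the part $1$ now forcing $f_1$ to be even and the parities of $k$ and $a$ adjusting accordingly, whence ``$k$ odd, $a$ even''. Setting up this $(f_1, f_2)$-bookkeeping so that it closes into a finite, uniquely solvable system --- tracking how $f_1 + f_2 < k$ interacts with $f_1 < b$ across all admissible residues --- is the step I expect to be the main obstacle.

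Finally, with the system and its $x = 0$ boundary data in hand, I would exhibit a candidate solution: a twisted Andrews--Gordon multisum, a series of the shape
\[ \sum_{n_1, \ldots, n_{k-1} \ge 0} \frac{x^{\,c(\mathbf n)}\, q^{\,Q(\mathbf n)}}{(q;q)_{n_1} \cdots (q;q)_{n_{k-1}}}, \]
whose quadratic form $Q$ is calibrated to the modulus $2k+2$, together with an extra factor $(-q;q^2)_\infty$ --- respectively $1/(-q;q^2)_\infty$ --- that accounts for the parity. One checks that this series satisfies the same $q$-difference equations and the same boundary condition, hence equals $G_{k,a}(x;q)$, respectively $\overline{G}_{k,a}(x;q)$, by the uniqueness already secured; then one sets $x = 1$ and invokes the Jacobi triple product identity to reach the product $(q^a, q^{2k+2-a}, q^{2k+2}; q^{2k+2})_\infty$, which together with the elementary factors $(-q;q^2)_\infty$, $1/(q^2;q^2)_\infty$, $1/(q;q)_\infty$ gives the two stated formulas. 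The subtle point at this last stage is to guess the quadratic form $Q$ and the parity factor so that they satisfy the functional equations from the outset, rather than reverse-engineering them --- which is precisely the maneuver the remainder of the paper is designed to supply.
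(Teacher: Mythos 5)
Your combinatorial setup is sound --- the dissection on $(f_1,f_2)$ with a shift down by $2$, the observation that a shift by $1$ swaps the two parity classes, and the resulting recursion for $G_{k,b}-G_{k,b-1}$ are all correct, and the system they generate does determine $G_{k,b}$ uniquely together with the boundary data. But the proof has a genuine gap exactly where you flag it: the candidate solution is never produced, and the shape you propose for it cannot work as stated. A series of the form ``Andrews--Gordon multisum times a fixed factor $(-q;q^2)_\infty$'' cannot satisfy your $q$-difference system, because the equations involve the substitution $x\mapsto xq^2$ and a prefactor independent of $x$ is invariant under that substitution; the parity factor in the true two-variable solution must carry $x$ (in the series \eqref{eqEvenSeries} it appears as $\left((xq)^d;q^{2d}\right)_\infty/\left((xq^{n+1})^d;q^d\right)_\infty(xq;q^2)_\infty$ with $d=2$). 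A second structural obstacle: your telescoping chain runs through every $b$ from $k$ down to $0$, including $b\not\equiv k\pmod 2$, where the correct specialization at $x=1$ is the \emph{two-term} combination of Theorem \ref{thKimYee}; a single multisum with one quadratic form ``calibrated to modulus $2k+2$'' cannot specialize to the one-term product for one residue of $b$ and the two-term combination for the other. So ``exhibit the twisted multisum and check the equations'' is not a deferred routine step --- it is the entire content of the theorem, and as proposed it would fail on both counts above.

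It is also worth contrasting your system with the one the paper actually solves. You stay inside a single family at the cost of a right-hand side that is a \emph{sum} over even $f_2$, which makes any termwise verification against a candidate series unwieldy. The paper instead keeps the shift-by-$1$ dissection on $f_1$ alone, accepts the parity swap, and couples the two families through the one-term recurrences \eqref{eqEvenToOddRecurrence}--\eqref{eqOddToEvenRecurrence}; the series \eqref{eqEvenSeries} and \eqref{eqOddSeries} are then built (Section \ref{secConstruction}) precisely so that the verification collapses to the termwise identities \eqref{eqDeltaAlphaEvenToOdd}--\eqref{eqDeltaBetaOddToEven}, after which $x=1$ and Jacobi's triple product give the stated products (the present theorem being the case $d=2$, $f=e$). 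To complete your argument you should either adopt that coupled one-term system, or else write down an explicit solution of your own system with the correct $x$-dependence --- which in practice forces you back to series of the paper's $\alpha/\beta$ form rather than a guessed multisum.
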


Notice that it is no longer straightforward to interpret the latter infinite product 
in terms of partitions.  
This property will pertain to many other identities to follow, 
so we will abstain from writing the divisibility conditions for 
partitions generated by the infinite products.  

We conclude this section with the missing case in Theorem \ref{thmAndrewsOddEvenFull}.  

\begin{theorem}[Kim and Yee \cite{KimYee}]
\label{thKimYee}
  Suppose $a, k, i, n$ are non-negative integers such that 
  $1 \leq a \leq k$, and $a \not \equiv k \pmod{2}$.  
  Let $W_{k,a}(n)$ denote the number of partitions of $n$ where 
  \[
    f_{i} + f_{i+1} < k, \quad f_1 < a \textrm{, and } 2 \vert f_{2i}. 
  \]
  Then, 
  \begin{eqnarray*}
    \sum_{n \geq 0} W_{k, a}(n) q^n 
    = & \frac{1}{1+q} \frac{(-q; q^2)_\infty (q^{a+1}, q^{2k+1-a}, q^{2k+2}; q^{2k+2})_\infty}{(q^2; q^2)_\infty}
      \\
    + & \frac{q}{1+q} \frac{(-q; q^2)_\infty (q^{a-1}, q^{2k+3-a}, q^{2k+2}; q^{2k+2})_\infty}{(q^2; q^2)_\infty}.  
  \end{eqnarray*}
\end{theorem}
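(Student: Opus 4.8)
The plan is to follow the same engine that drives the rest of the paper: set up the system of functional equations satisfied by a two-variable refinement of the generating function for the partitions counted by $W_{k,a}(n)$, and then exhibit a solution built out of theta-like products. Concretely, for $1\le a\le k$ introduce
\begin{equation*}
  D_a(x;q)=\sum_{n\ge 0}\; \Bigl(\sum_{\lambda} x^{\nu(\lambda)}q^{|\lambda|}\Bigr)q^n,
\end{equation*}
where $\lambda$ runs over partitions of $n$ with $f_i+f_{i+1}<k$, $f_1<a$, and $2\mid f_{2i}$, and $\nu(\lambda)$ records the number of parts (or, in the version that makes the recursion cleanest, the number of parts equal to $1$ together with a shift bookkeeping the smallest part). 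The first step is to derive the standard Andrews–Gordon style recurrences relating $D_a$, $D_{a-1}$ and $D_a$ evaluated at $xq$ or $xq^2$; the parity constraint $2\mid f_{2i}$ forces one to split according to the parity of the smallest part, so the recursion will naturally pair up $D_a(x;q)$ with $D_{a+1}$ and $D_{a-1}$ and involve both $xq$ and $xq^2$ substitutions. This is exactly the place where the hypothesis $a\not\equiv k\pmod 2$ changes the boundary behaviour relative to Theorem \ref{thmAndrewsOddEvenFull}, and it is the reason the answer is a sum of two products rather than a single one.

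The second step is to propose the closed form. Guided by the $a\equiv k$ case, where a single product $\frac{(-q;q^2)_\infty (q^a,q^{2k+2-a},q^{2k+2};q^{2k+2})_\infty}{(q^2;q^2)_\infty}$ appears, I would try an ansatz of the form $\frac{1}{1+q}P_{a+1}+\frac{q}{1+q}P_{a-1}$ where $P_b=\frac{(-q;q^2)_\infty (q^{b},q^{2k+2-b},q^{2k+2};q^{2k+2})_\infty}{(q^2;q^2)_\infty}$, i.e. a convex-like combination of the two neighbouring products whose indices now have the correct parity. One then verifies that this candidate satisfies the functional equations and the initial conditions $D_0(x;q)=?$ and the normalization at $x=1$; because the products $P_b$ are built from the triple product, their behaviour under $q\mapsto$ shifts and their $q$-shift relations are classical, so checking the recurrence reduces to a finite $q$-series identity of Jacobi triple product type, plus tracking the $\frac{1}{1+q},\frac{q}{1+q}$ weights through one application of the recursion. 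Uniqueness of the solution to the system (for fixed $|q|<1$, as a formal power series in the auxiliary variable, the recursion determines all coefficients) then finishes the argument.

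The main obstacle I anticipate is pinning down the correct auxiliary variable and the precise form of the recursion so that the two-term answer falls out cleanly rather than as an opaque alternating sum. In the parity setting the constraint $2\mid f_{2i}$ interacts asymmetrically with the $f_1<a$ boundary, and when $a$ and $k$ have opposite parities the "last step" of the Gordon-type peeling lands on an even part with an odd multiplicity budget (or vice versa), which is what splits the generating function. Getting the combinatorics of that final peeling exactly right — equivalently, identifying which linear combination of $D_{a\pm1}$ at which $q$-shift equals $D_a$ — is the crux; once it is in hand, substituting the ansatz and invoking the triple product identity is routine. A secondary, purely bookkeeping nuisance is that the products $P_{a+1}$ and $P_{a-1}$ have a pole/zero at $q=-1$ that cancels against the $1+q$ in the denominator, so one should either work formally in $\mathbb{Z}[[q]]$ or clear denominators before comparing coefficients.
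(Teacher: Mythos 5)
Your overall strategy --- refine by the number of parts, write down a Gordon-style system of functional equations, exhibit a solution, and invoke uniqueness plus the triple product --- is exactly the engine this paper runs: Theorem \ref{thKimYee} is obtained here as the case $d=2$, $e\neq f$ of Theorem \ref{thmOddEvenFull}, via Lemma \ref{lmOddEvenFull}. But as written your proposal has two genuine gaps, both of which you yourself flag as ``the crux'' and leave unresolved. First, the system of functional equations does not close on the single family $D_a$: the peeling step (delete the $1$'s, then subtract $1$ from every remaining part) swaps the parities of all parts, so the condition $2\mid f_{2i}$ becomes $2\mid f_{2i+1}$ on the reduced partition, and the recurrence necessarily couples the $W_{k,a}$-type counts with the companion family in which \emph{odd} parts occur with even multiplicity (the $\overline{W}$'s of Theorem \ref{thmAndrewsOddEvenFull}); this is the content of \eqref{eqEvenToOddRecurrence}--\eqref{eqOddToEvenRecurrence}. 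Your ``split according to the parity of the smallest part'' is not the right mechanism, and without the companion family no consistent recursion of the form you describe exists.

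Second, the ansatz $\frac{1}{1+q}P_{a+1}+\frac{q}{1+q}P_{a-1}$ is only the value of the generating function at $x=1$, whereas the functional equations involve the substitution $x\mapsto xq$ and therefore cannot be verified at $x=1$: one needs an explicit two-variable series solution (here \eqref{eqEvenSeries}--\eqref{eqOddSeries}, with the four families of coefficients ${}_dc\alpha[f]_n$, ${}_dc\beta[f]_n$, ${}_dt\alpha_n$, ${}_dt\beta_n$), and the verification of the recurrences is a term-by-term telescoping as in \eqref{eqDeltaAlphaEvenToOdd}--\eqref{eqDeltaBetaOddToEven}, not ``a finite $q$-series identity of Jacobi triple product type''; the triple product enters only at the very end, after setting $x=1$. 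Constructing that two-variable solution --- in particular discovering that one needs $d$ distinct $\alpha$'s and $\beta$'s indexed by the residue of $f$ modulo $d$, and that the weights $\tfrac{1}{1+q}$ and $\tfrac{q}{1+q}$ emerge from inverting the resulting $d\times d$ linear system --- is the actual mathematical content (section \ref{secConstruction}), and it is precisely what your outline defers.
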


Observe that it is still possible to interpret the identity using multiplicity and divisibility conditions.  

\section{Main Results}
\label{secMain}

This section is devoted to establishing the following identities.  

\begin{theorem}
\label{thmOddEvenFull}
  Suppose $a, k, d, e, f, i, n$ are non-negative integers such that 
  $1 \leq a, d \leq k$, $1 \leq f \leq d$, 
  and $e = d$ or $2e = d$.  
  Let ${}_dB_{dk+e, da + f}(n)$ be the number of partitions of $n$ such that 
  \[
    f_i + f_{i+1} < dk + e, \quad f_1 < da + f \textrm{, and } \quad d \vert f_{2i}, 
  \]
  and let ${}_d\overline{B}_{dk+e, da}(n)$ be the number of partitions of $n$ such that 
  \[
    f_i + f_{i+1} < dk + e, \quad f_1 < da \textrm{, and } \quad d \vert f_{2i + 1}.  
  \]
  Then, 
  \begin{equation*}
    \sum_{n \geq 0} {}_dB_{dk+e, da + f}(n) q^n = 
  \end{equation*}
  \begin{equation*}
  \begin{cases}
      \begin{array}{rl}
	\frac{1}{(q^{2d}; q^{2d})_\infty (q; q^2)_\infty} & 
	\left( \frac{(1 - q^{d - e + f})}{(1 - q^d)} (q^{da+e}, q^{2dk-da+e+d}, q^{2dk+2e+d}; q^{2dk+2e+d})_\infty \right.\\ & \\
	& \left. + \frac{(q^{d - e + f} - q^d)}{(1 - q^d)} (q^{d(a-1)+e}, q^{2dk-da+e+2d}, q^{2dk+2e+d}; q^{2dk+2e+d})_\infty \right)
      \end{array}, 
      & \textrm{ if } f < e \\
      \phantom{0} & \phantom{0} \\
      \frac{1}
	{(q^{2d}; q^{2d})_\infty (q; q^2)_\infty}
	(q^{da+e}, q^{2dk-da+e+d}, q^{2dk+2e+d}; q^{2dk+2e+d})_\infty, 
      & \textrm{ if } f = e \\
      \phantom{0} & \phantom{0} \\
      \begin{array}{rl}
	\frac{1}{(q^{2d}; q^{2d})_\infty (q; q^2)_\infty} & 
	\left( \frac{(q^{f-e} - q^d)}{(1 - q^d)} (q^{da+e}, q^{2dk-da+e+d}, q^{2dk+2e+d}; q^{2dk+2e+d})_\infty \right.\\ & \\
	& \left. + \frac{(1 - q^{f-e})}{(1 - q^d)} (q^{d(a+1)+e}, q^{2dk-da+e}, q^{2dk+2e+d}; q^{2dk+2e+d})_\infty \right)
      \end{array},
      & \textrm{ if } f > e, 
    \end{cases}
  \end{equation*}
  and 
  \begin{equation*}
    \sum_{n \geq 0} {}_d\overline{B}_{dk+e, da}(n) q^n 
    =       \frac{1}
	{(q^{d}; q^{2d})_\infty (q^2; q^2)_\infty}
	(q^{da}, q^{2dk-da+2e+d}, q^{2dk+2e+d}; q^{2dk+2e+d})_\infty.  
  \end{equation*}
\end{theorem}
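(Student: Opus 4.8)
The plan is to set up a system of functional equations for refined generating functions and then \emph{solve} it, exactly in the spirit advertised in the introduction. First I would introduce two-variable generating functions
\[
  {}_dB_{dk+e,j}(x,q) = \sum_{n,m\geq 0} {}_dB_{dk+e,j}(m,n)\, x^m q^n,
\]
where the extra parameter $m$ counts something convenient — most naturally the number of parts (or the number of parts exceeding $1$) — and similarly ${}_d\overline{B}_{dk+e,j}(x,q)$, letting the lower index $j$ range over all admissible values $0 \le j \le dk+e$ rather than only those of the form $da+f$. The combinatorial restrictions $f_i + f_{i+1} < dk+e$, $f_1 < j$, together with the divisibility condition $d \mid f_{2i}$ (resp. $d \mid f_{2i+1}$), translate into recurrences relating ${}_dB_{dk+e,j}$ with different values of $j$: deleting a $1$ from each part and tracking how many $1$'s occur (in multiples of $d$, or freely, depending on parity) gives a relation expressing ${}_dB_{dk+e,j}(x,q)$ as a finite sum of ${}_dB_{dk+e,j'}(xq,q)$'s with coefficients that are monomials in $q$. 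These are the functional equations that the generating functions satisfy; writing them down carefully, respecting the constraint $e = d$ or $2e = d$, is the bookkeeping heart of the argument.

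Next I would exhibit explicit solutions to this system. Following Andrews' poset method \cite{AndrewsPosetRR}, one writes each ${}_dB_{dk+e,j}(x,q)$ as a multiple $q$-series — a sum over a lattice of summation indices with a quadratic form in the exponent and a product of $q$-Pochhammer symbols in the denominator — with the linear part of the exponent and the boundary of summation depending on $j$. The claim is that this family of series satisfies the recurrences term by term, which reduces to an elementary identity among monomial coefficients after shifting summation indices; this is the step the paper promises to ``reverse-engineer'' in section \ref{secConstruction}, so here I would simply verify it. Once the functional equations and the correct initial conditions (small cases of $k$, or the $x \to$ limiting values) are matched, uniqueness of the solution to the recurrence — which follows because each step strictly increases the minimal part, so coefficients of $q^n$ are determined by finitely many steps — forces ${}_dB_{dk+e,j}(x,q)$ to equal the constructed series.

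Finally I would set $x = 1$ and evaluate. The multiple series at $x=1$ should collapse, via the Jacobi triple product identity (and, for the first family, an application of the $q$-binomial theorem or a short telescoping to produce the factor $(-q;q^2)_\infty$ or its reciprocal, i.e. $1/(q;q^2)_\infty$), into the infinite products displayed in the theorem. The case split $f < e$, $f = e$, $f > e$ arises precisely because $j = da+f$ sits strictly inside, at, or strictly past the ``natural'' boundary value relative to the block size $d$ and the offset $e$, so that the relevant instance of the recurrence either closes up on the nose (giving the clean single-product middle case) or expresses the answer as a two-term combination with the rational coefficients $(1-q^{d-e+f})/(1-q^d)$ etc.; I would check that these coefficients are exactly what the recurrence produces when $j$ is not of the privileged form. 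The overlined family is simpler because the divisibility falls on the odd-indexed frequencies, which decouples the $1$'s from the parity obstruction and yields a single product with no case split.

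The main obstacle I expect is \textbf{guessing and verifying the correct multiple-series solution}, i.e. pinning down the quadratic form, the linear shifts, and the summation bounds as functions of $j$, $d$, $e$, $k$ so that \emph{all} the recurrences hold simultaneously — the constraint $e=d$ or $2e=d$ is presumably exactly what makes a uniform such solution exist, and getting the $x=1$ specialization to telescope into the stated products (especially reproducing $(q;q^2)_\infty$ or $(q^d;q^{2d})_\infty$ in the denominator and the shifted modulus $2dk+2e+d$ in the triple product) is where the real work lies. The combinatorial translation into functional equations and the final triple-product evaluation are routine by comparison.
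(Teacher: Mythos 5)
Your overall strategy coincides with the paper's: refine by the number of parts, derive functional equations by stripping the $1$'s, exhibit an explicit series solution, invoke uniqueness, then set $x=1$ and apply Jacobi's triple product. But the proposal stops exactly where the content of the theorem begins. The step you defer --- ``exhibit explicit solutions to this system \ldots I would simply verify it'' --- is the entire substance of the proof, and the shape you guess for the solution is not the one that works. The paper does not use a multiple series over a lattice of summation indices; it uses a single sum of Andrews' $Q_{k,a}$ type, $\sum_{n\ge 0} \alpha_n(x;q)(q^{-n})^{da+f} + \beta_n(x;q)(xq^{n+1})^{da+f}$, and the decisive discovery (spelled out in section \ref{secConstruction}) is that a single coefficient pair $(\alpha_n,\beta_n)$ leads to \emph{inconsistent} equations: one needs $d$ distinct families ${}_dc\alpha[f]_n$, ${}_dc\beta[f]_n$ indexed by the residue class of $f$ modulo $d$, each itself a two-term combination with rational coefficients in $(xq)$, found by inverting a $d\times d$ circulant-type matrix. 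Without that ansatz there is nothing to verify. Relatedly, your account of where the hypothesis $e=d$ or $2e=d$ enters is off: the recurrences \eqref{eqEvenToOddSeriesrec}--\eqref{eqDeltaBetaOddToEven} hold without it; the restriction is needed only to force the vanishing initial condition \eqref{eqOddEvenSeriesInitZero} via ${}_dc\alpha[0]_n(x;q) = {}_dc\beta[0]_n(x;q)$, which is the one place the paper uses it.

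Two smaller corrections. First, the functional equations do not express ${}_dB_{dk+e,j}(x;q)$ in terms of ${}_dB_{dk+e,j'}(xq;q)$: deleting the $1$'s and subtracting $1$ from every remaining part flips the parity of each part, so the recurrence necessarily sends the even-divisibility family to the odd-divisibility family ${}_d\overline{B}$ and vice versa, and the resulting bound on $f_1$ must be rounded down to a multiple of $d$ in two different ways according to whether $f\le e$ or $f>e$; this rounding, not the position of $j$ relative to a boundary, is what ultimately produces the three-way case split in the products. Second, at $x=1$ no $q$-binomial theorem or telescoping is needed to manufacture $1/(q;q^2)_\infty$ or $1/(q^d;q^{2d})_\infty$: those factors already sit inside the terms ${}_dc\alpha[f]_n(1;q)$ and ${}_dt\alpha_n(1;q)$, and the evaluation is a direct reindexing ($n\mapsto -n$ in the $\alpha$-sum, a shift in the $\beta$-sum) followed by one application of the triple product.
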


In other words, ${}_dB_{dk+e, da + f}(n)$ (respectively, ${}_d\overline{B}_{dk+e, da + f}(n)$)
enumerates the number of partitions of $n$ 
satisfying Gordon's criterion, in which each even (respectively, odd) part appears a multiple of $d$ times.  
Observe that $d = 1$ is Rogers-Ramanujan-Gordon identities (Theorem \ref{thRRG1}), 
and $d = 2$ is Andrews' theorems (Theorem \ref{thmAndrewsOddEvenFull}) 
and Kim and Yee's addendum (Theorem \ref{thKimYee}).  
Unless the parameters satisfy some additional conditions, 
it is not immediately possible to interpret the infinite products in terms of partitions.  

To prove the theorem, we need auxiliary series and an intermediate result.  
The parameters being as in Theorem \ref{thmOddEvenFull}, define
\begin{equation}
\label{eqEvenSeries}
  {}_dC_{dk+e, da+f}(x; q) 
  := \displaystyle \sum_{n \geq 0} {}_dc\alpha[f]_n(x; q) \; \left( q^{-n} \right)^{da+f} 
    + {}_dc\beta[f]_n(x; q) \; \left( xq^{n+1} \right)^{da+f}, 
\end{equation}
and 
\begin{equation}
\label{eqOddSeries}
  {}_dT_{dk+e, da}(x; q) 
  := \displaystyle \sum_{n \geq 0} {}_dt\alpha_n(x; q) \; \left( q^{-n} \right)^{da} 
    + {}_dt\beta_n(x; q) \; \left( xq^{n+1} \right)^{da}, 
\end{equation}
where the terms in the single sums are given as follows.  
\begin{equation*}
  {}_dc\alpha[f]_n(x; q) 
  = \frac{ (-1)^n x^{(dk+e)n} q^{(2dk+2e+d)\binom{n+1}{2} + n(f-e)} \left( (xq)^{d} ; q^{2d} \right)_\infty }
	{ (q^d; q^d)_n \left( (xq^{n+1})^{d}; q^d \right)_\infty (xq; q^2)_\infty }
\end{equation*}
\begin{equation*}
  \times
  \begin{cases}
    \frac{1 - (xq)^{d+f-e} }{ 1 - (xq)^d } + q^{nd} \; \frac{(xq)^{d+f-e} - (xq)^{d} }{ 1 - (xq)^d }, 
    & \textrm{ if } f < e \\
    \phantom{0} & \phantom{0} \\
    1, & \textrm{ if } f = e \\
    \phantom{0} & \phantom{0} \\
    \frac{ (xq)^{f-e} - (xq)^{d} }{ 1 - (xq)^d } + q^{-nd} \; \frac{ 1 - (xq)^{f-e} }{ 1 - (xq)^d }, 
    & \textrm{ if } f > e, 
  \end{cases}
\end{equation*}

\begin{equation*}
  {}_dc\beta[f]_n(x; q) 
  = - \frac{ (-1)^n x^{(dk+e)n} q^{(2dk+2e+d)\binom{n+1}{2} + n(e-f)} \left( (xq)^{d} ; q^{2d} \right)_\infty }
	{ (q^d; q^d)_n \left( (xq^{n+1})^{d}; q^d \right)_\infty (xq; q^2)_\infty }
\end{equation*}

\begin{equation*}
  \times
  \begin{cases}
    \frac{1 - (xq)^{e-f} }{ 1 - (xq)^d } + q^{-nd} \; \frac{ 1 - (xq)^{e-f} }{ 1 - (xq)^d }, 
    & \textrm{ if } f < e \\ 
    \phantom{0} & \phantom{0} \\
    1, & \textrm{ if } f = e \\ 
    \phantom{0} & \phantom{0} \\
    \frac{ 1 - (xq)^{d-f+e} }{ 1 - (xq)^d } + q^{nd} \; \frac{ (xq)^{d-f+e} - (xq)^{d} }{ 1 - (xq)^d }, 
    & \textrm{ if } f > e, 
  \end{cases}
\end{equation*}

\begin{equation*}
  {}_dt\alpha_n(x; q) 
  = \frac{ (-1)^n x^{(dk+e)n} q^{(2dk+2e+d)\binom{n+1}{2}} \left( (xq^2)^{d} ; q^{2d} \right)_\infty }
	{ (q^d; q^d)_n \left( (xq^{n+1})^{d}; q^d \right)_\infty (xq^2; q^2)_\infty }, 
\end{equation*}

\begin{equation*}
  {}_dt\beta_n(x; q) 
  = - \frac{ (-1)^n x^{(dk+e)n} q^{(2dk+2e+d)\binom{n+1}{2}} \left( (xq^2)^{d} ; q^{2d} \right)_\infty }
	{ (q^d; q^d)_n \left( (xq^{n+1})^{d}; q^d \right)_\infty (xq^2; q^2)_\infty }.  
\end{equation*}

It is true that the $\alpha$'s and $\beta$'s depend on $k$ and $e$ also, 
but they are somewhat less significant parameters than $d$, 
so we will not explicitly mention that bond in order to avoid a profusion of indices.  
However, it is important to keep in mind that neither kind of $\alpha$'s or $\beta$'s 
depend on $a$.  We will make use of this independence throughout the computations.  

\begin{lemma}
\label{lmOddEvenFull}
  Suppose $a, k, d, e, f, i, n, m$ are non-negative integers such that 
  $1 \leq a, d \leq k$, $1 \leq f \leq d$, 
  and $e = d$ or $2e = d$.  
  Let ${}_db_{dk+e, da + f}(m, n)$ be the number of partitions of $n$ 
  into $m$ parts such that 
  \[
    f_i + f_{i+1} < dk + e, \quad f_1 < da + f \textrm{, and } \quad d \vert f_{2i}, 
  \]
  and let ${}_d\overline{b}_{dk+e, da}(m, n)$ be the number of partitions of $n$ 
  into $m$ parts such that 
  \[
    f_i + f_{i+1} < dk + e, \quad f_1 < da \textrm{, and } \quad d \vert f_{2i + 1}.  
  \]
  Let
  \[
    {}_d\mathcal{B}_{dk+e, da+f}(x; q) 
    = \sum_{m,n \geq 0} {}_db_{dk+e, da + f}(m, n) x^m q^n ,
  \]
  and
  \[
    {}_d\overline{\mathcal{B}}_{dk+e, da}(x; q) 
    = \sum_{m,n \geq 0} {}_d\overline{b}_{dk+e, da}(m, n) x^m q^n .
  \]
  Then, 
  \begin{equation*}
    \displaystyle {}_d\mathcal{B}_{dk+e, da+f}(x; q) = {}_dC_{dk+e, da+f}(x; q) 
  \end{equation*}
  and 
  \begin{equation*}
    \displaystyle {}_d\overline{\mathcal{B}}_{dk+e, da}(x; q) = {}_dT_{dk+e, da}(x; q).  
  \end{equation*}
\end{lemma}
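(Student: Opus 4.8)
The plan is the conventional two-stage argument. First I would record a system of $q$-difference equations in $x$ that the bivariate series ${}_d\mathcal{B}_{dk+e,da+f}(x;q)$ and ${}_d\overline{\mathcal{B}}_{dk+e,da}(x;q)$ must satisfy; then I would check that the explicit series ${}_dC_{dk+e,da+f}(x;q)$ of \eqref{eqEvenSeries} and ${}_dT_{dk+e,da}(x;q)$ of \eqref{eqOddSeries} satisfy the same system together with the same initial data; finally I would invoke uniqueness of such solutions among formal power series in $x$. Section \ref{secConstruction} will explain afterwards how the particular shapes of $C$ and $T$ are arrived at; in this proof they are simply handed to us and verified.

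For the first stage I would sort the partitions counted by ${}_db_{dk+e,da+f}(m,n)$ according to the multiplicity $f_1$ of the part $1$. Deleting all the $1$'s and subtracting $1$ from every remaining part is a weight-tracking bijection: a partition $\lambda$ with $m$ parts, sum $n$ and $f_1=j$ is carried to a partition $\mu$ with $m-j$ parts and sum $n-m$ for which $f_i(\mu)=f_{i+1}(\lambda)$ for all $i\ge1$. Under it Gordon's conditions at indices $i\ge2$ are preserved, the boundary instance $f_1+f_2<dk+e$ becomes the bound $f_1(\mu)<dk+e-j$, and --- the decisive point --- the condition $d\mid f_{2i}$ on even-indexed frequencies turns into $d\mid f_{2i+1}(\mu)$ on odd-indexed ones, so $\mu$ is of the \emph{other} (barred) type; running the same operation on barred partitions returns unbarred ones, where the admissible values of $f_1$ are exactly the multiples of $d$ below the bound. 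Weighting the $j$ deleted $1$'s by $(xq)^j$ and summing produces the $x\mapsto xq$ substitution automatically, giving
\[
  {}_d\mathcal{B}_{dk+e,da+f}(x;q)=\sum_{j=0}^{da+f-1}(xq)^j\,{}_d\overline{\mathcal{B}}_{dk+e,\,d\lceil(dk+e-j)/d\rceil}(xq;q),
\]
and, since for the barred family $f_1$ is itself a multiple of $d$,
\[
  {}_d\overline{\mathcal{B}}_{dk+e,da}(x;q)=\sum_{j=0}^{a-1}(xq)^{jd}\,{}_d\mathcal{B}_{dk+e,\,d(k-j)+e}(xq;q).
\]
Taking first differences in the second index turns each of these into a single-term relation (the difference isolates the partitions with $f_1$ equal to one prescribed value), and together with the bottom-of-ladder values such as ${}_d\overline{\mathcal{B}}_{dk+e,0}(x;q)=0$ and the normalizations ${}_d\mathcal{B}_{dk+e,da+f}(0;q)={}_d\overline{\mathcal{B}}_{dk+e,da}(0;q)=1$ coming from the empty partition, these data pin down both series uniquely.

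For the second stage the bookkeeping of \eqref{eqEvenSeries}--\eqref{eqOddSeries} is engineered precisely to cooperate: by construction ${}_dc\alpha[f]_n$, ${}_dc\beta[f]_n$, ${}_dt\alpha_n$, ${}_dt\beta_n$ do not involve $a$, which enters only through the factors $(q^{-n})^{da+f}$ and $(xq^{n+1})^{da+f}$. Hence lowering $a$ by one multiplies the $\alpha$-part of every summand by $q^{nd}$ and the $\beta$-part by $(xq^{n+1})^{-d}$, and after reindexing $n\mapsto n\pm1$ and substituting $x\mapsto xq$ one recovers the ladder relations of the first stage. I would carry this out in the three regimes $f<e$, $f=e$, $f>e$ separately --- these are exactly the three cases built into ${}_dc\alpha[f]_n$ and ${}_dc\beta[f]_n$ --- reducing each to routine shift identities for the Pochhammer symbols $(q^d;q^d)_n$, $\big((xq^{n+1})^d;q^d\big)_\infty$, $\big((xq)^d;q^{2d}\big)_\infty$ and $(xq;q^2)_\infty$ occurring in the summands, with the Euler identity $\sum_{n\ge0}(-1)^n q^{d\binom{n}{2}}z^n/(q^d;q^d)_n=(z;q^d)_\infty$ handling the boundary cases; the argument for ${}_dT$ is the same but without the case split. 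For the initial conditions the factor $x^{(dk+e)n}$ annihilates every $n\ge1$ summand at $x=0$ while the $n=0$ summand equals $1$, so ${}_dC_{dk+e,da+f}(0;q)={}_dT_{dk+e,da}(0;q)=1$.

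Finally, each equation of the system expresses the series at $x$ in terms of values at $xq$ multiplied by a positive power of $xq$ (plus lower rungs of the ladder), so comparing coefficients of $x^m$ yields a strictly triangular recursion: the $x^0$-coefficient is fixed by the normalization and every higher one is then forced. Since ${}_dC$ and ${}_dT$ are genuine power series in $x$ obeying the same equations, they agree with ${}_d\mathcal{B}$ and ${}_d\overline{\mathcal{B}}$ coefficientwise, which is the assertion. The heart of the work --- and the only real obstacle --- is the verification in the previous paragraph: one must match the coupling between the barred and unbarred families, the $f<e$ / $f=e$ / $f>e$ trichotomy, and the two admissible regimes $e=d$ and $2e=d$ all at once, and check that the ladder terminates consistently (where the lower index reaches $d(k+1)$, or an infinite product in a summand degenerates). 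None of this is conceptually deep, but it is delicate, and it is exactly the computation that section \ref{secConstruction} is meant to make systematic.
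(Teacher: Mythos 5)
Your plan is essentially the paper's own proof: the same strip-the-ones bijection (deleting the $1$'s and subtracting $1$ from the remaining parts, which swaps the parity of parts and hence exchanges the conditions $d\mid f_{2i}$ and $d\mid f_{2i+1}$) yields the first-difference recurrences \eqref{eqEvenToOddRecurrence}--\eqref{eqOddToEvenRecurrence}, uniqueness follows from the triangular structure together with the initial conditions, and the explicit series are then verified termwise exactly as in \eqref{eqDeltaAlphaEvenToOdd}--\eqref{eqDeltaBetaOddToEven}. The one point you should make explicit rather than leave under ``the ladder terminates consistently'' is the index-zero condition ${}_dC_{dk+e,0}(x;q)={}_dT_{dk+e,0}(x;q)=0$ for the analytic series, since that is the single place where the hypothesis $e=d$ or $2e=d$ is actually used.
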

\begin{proof}
  We show that both sides of the asserted identities 
  satisfy the same functional equations with the same initial conditions.  

  Observe that 
  \begin{align}
    {}_db_{dk+e, da+f}(m,n) & - {}_db_{dk+e, da+f-1}(m,n) \notag \\
  \label{eqEvenToOddRecurrence}
    = & \begin{cases}
       {}_d\overline{b}_{dk+e, dk-da+d}(m-(da+f-1),n-m) & \textrm{ if } f \leq e \\
       {}_d\overline{b}_{dk+e, dk-da}(m-(da+f-1),n-m) & \textrm{ if } f > e,
      \end{cases}
  \end{align}
  \begin{equation}
  \label{eqOddToEvenRecurrence}
    {}_d\overline{b}_{dk+e, da+d}(m,n) - {}_d\overline{b}_{dk+e, da}(m,n)
    = {}_db_{dk+e,dk-da+e}(m-da,n-m)
  \end{equation}
  for $n,m \geq 0$, 
  \begin{equation}
  \label{eqOddEvenInitCondNegatives}
    {}_db_{dk+e, da+f}(m,n) = {}_d\overline{b}_{dk+e, da}(m,n) = 0
  \end{equation}
  for $m < 0$, $n < 0$, or $m = 0$ and $n > 0$, or $m > 0$ and $n = 0$,
  \begin{equation}
  \label{eqOddEvenInitCondEmptyPtn}
    {}_db_{dk+e, da+f}(0,0) = {}_d\overline{b}_{dk+e, da}(0,0) = 1, 
  \end{equation}
  and finally
  \begin{equation}
  \label{eqOddEvenInitCondZero}
    {}_db_{dk+e, 0}(m,n) = {}_d\overline{b}_{dk+e, 0}(m,n) = 0 
  \end{equation}
  for all $m,n$.  
  
  To justify \eqref{eqEvenToOddRecurrence}, 
  let $\lambda = 1 f_1 + 2 f_2 + \cdots + r f_r$ 
  be a partition enumerated by 
  ${}_db_{dk+e, da+f}(m,n) - {}_db_{dk+e, da+f-1}(m,n)$.  
  Then, 
  \[
    f_i + f_{i+1} < dk+e, \quad d|f_{2i} \textrm{, and } f_1 = da + f - 1.  
  \]
  Erase the 1's from $\lambda$, and subtract 1 from the remaining parts.  
  This switches the parities of parts.  
  Call the remaining partition $\widetilde{\lambda}$, which has exactly $m - (da+f-1)$ parts 
  thanks to the deleted 1's.  
  Its weight, or sum of its parts is $n-m$, since we effectively subtracted 1 from all parts, 
  including the 1's.  
  On the one hand, in $\lambda$, $f_1 + f_2 < dk+e$ and $f_1 = da+f-1$, 
  so $f_2 < dk - da + e - f + 1$.  
  On the other hand, $d | f_2$, so $f_2 < dk - da + d$ or $f_2 < dk - da$
  depending on how $e$ and $f$ compare.  
  This last adjustment is to make both sides of the inequality multiples of $d$.  
  Now, frequencies of parts in $\widetilde{\lambda}$ satisfy 
  \[
    f_i + f_{i+1} < dk+e, \quad f_1 < dk - da + d \textrm{ or } f_1 < dk - da, 
    \textrm{ and } d \vert f_{2i+1}.   
  \]
  Therefore, $\widetilde{\lambda}$ is enumerated by 
  ${}_d\overline{b}_{dk+e, dk-da+d}(m-(da+f-1),n-m)$ or 
  ${}_d\overline{b}_{dk+e, dk-da}(m-(da+f-1),n-m)$, 
  depending on comparison of $e$ and $f$.  
  Proof of \eqref{eqOddToEvenRecurrence} is similar.  
  
  We only partition non-negative integers into positive integers.  
  A partition of a positive integer must contain at least one part,   
  which explains \eqref{eqOddEvenInitCondNegatives}.  
  There is a unique partition of zero, namely the empty partition.  
  This is captured by \eqref{eqOddEvenInitCondEmptyPtn}.  
  Frequencies cannot be negative.  
  In particular, there are no partitions with $f_1 < 0$,
  hence \eqref{eqOddEvenInitCondZero}.  
  
  The recurrences and initial conditions \eqref{eqEvenToOddRecurrence}-\eqref{eqOddEvenInitCondZero} 
  uniquely determine ${}_db_{dk+e, da + f}(m, n)$ and \linebreak[4]
  ${}_d\overline{b}_{dk+e, da}(m,n)$'s.  
  On the generating function side, they yield the following.  
  \begin{equation*}
    {}_d\mathcal{B}_{dk+e, da+f}(x; q) - {}_d\mathcal{B}_{dk+e, da+f-1}(x; q)
    = \begin{cases}
     (xq)^{da+f-1} {}_d\overline{\mathcal{B}}_{dk+e, dk-da+d}(xq; q) & \textrm{ if } f \leq e \\
     (xq)^{da+f-1} {}_d\overline{\mathcal{B}}_{dk+e, dk-da}(xq; q) & \textrm{ if } f > e
    \end{cases}
  \end{equation*}
  \begin{equation*}
    {}_d\overline{\mathcal{B}}_{dk+e, da + d}(x; q) - {}_d\overline{\mathcal{B}}_{dk+e, da}(x; q)
    = (xq)^{da} {}_d\mathcal{B}_{dk+e, dk-da+e}(x; q)
  \end{equation*}
  \begin{equation*}
    {}_d\mathcal{B}_{dk+e, da+f}(0; q) = {}_d\overline{\mathcal{B}}_{dk+e, da}(0; q) = 1
  \end{equation*}
  \begin{equation*}
    {}_d\mathcal{B}_{dk+e, 0}(x; q) = {}_d\overline{\mathcal{B}}_{dk+e, 0}(x; q) = 0
  \end{equation*}
  Notice that these uniquely determine the generating functions.  
  
  To complete the proof, we verify the following.  
  \begin{align}
    {}_dC_{dk+e, da+f}(x; q) & - {}_dC_{dk+e, da+f-1}(x; q) \notag \\
  \label{eqEvenToOddSeriesrec}
  = & \begin{cases}
     (xq)^{da+f-1} {}_dT_{dk+e, dk-da+d}(xq; q) & \textrm{ if } f \leq e \\
     (xq)^{da+f-1} {}_dT_{dk+e, dk-da}(xq; q) & \textrm{ if } f > e
    \end{cases}
  \end{align}
  \begin{equation}
  \label{eqOddToEvenSeriesrec}
    {}_dT_{dk+e, da + d}(x; q) - {}_dT_{dk+e, da}(x; q)
    = (xq)^{da} {}_dC_{dk+e, dk-da+e}(x; q)
  \end{equation}
  \begin{equation}
  \label{eqOddEvenSeriesInitEmpty}
    {}_dC_{dk+e, da+f}(0; q) = {}_dT_{dk+e, da}(0; q) = 1
  \end{equation}
  \begin{equation}
  \label{eqOddEvenSeriesInitZero}
    {}_dC_{dk+e, 0}(x; q) = {}_dT_{dk+e, 0}(x; q) = 0
  \end{equation}
  To see \eqref{eqEvenToOddSeriesrec} and \eqref{eqOddToEvenSeriesrec}, 
  we check that 
  \[
    {}_dc\alpha[f]_n(x;q) (q^{-n})^{da+f} - {}_dc\alpha[f-1]_n(x;q) (q^{-n})^{da+f-1}
  \]
  \begin{equation}
  \label{eqDeltaAlphaEvenToOdd}
    = \begin{cases}
       (xq)^{da+f-1} {}_dt\beta_{n-1}(xq) (xq^{n+1})^{dk-da+d} & \textrm{ if } f \leq e \\
       (xq)^{da+f-1} {}_dt\beta_{n-1}(xq) (xq^{n+1})^{dk-da} & \textrm{ if } f > e, 
      \end{cases} 
  \end{equation}
  \[
    {}_dc\beta[f]_n(x;q) (xq^{n+1})^{da+f} - {}_dc\beta[f-1]_n(x;q) (xq^{n+1})^{da+f-1}
  \]
  \begin{equation}
  \label{eqDeltaBetaEvenToOdd}
    = \begin{cases}
       (xq)^{da+f-1} {}_dt\alpha_{n}(xq) (q^{-n})^{dk-da+d} & \textrm{ if } f \leq e \\
       (xq)^{da+f-1} {}_dt\alpha_{n}(xq) (q^{-n})^{dk-da} & \textrm{ if } f > e, 
      \end{cases} 
  \end{equation}
  \begin{equation}
  \label{eqDeltaAlphaOddToEven}
    {}_dt\alpha_n(x; q) (q^{-n})^{da+d} - {}_dt\alpha_n(x; q) (q^{-n})^{da} 
    = (xq)^{da} {}_dc\beta[e]_{n-1}(xq;q) (xq^{n+1})^{dk-da+e}, 
  \end{equation}
  \begin{equation}
  \label{eqDeltaBetaOddToEven}
    {}_dt\beta_n(x; q) (xq^{n+1})^{da+d} - {}_dt\beta_n(x; q) (xq^{n+1})^{da} 
    = (xq)^{da} {}_dc\alpha[e]_{n}(xq;q) (q^{-n})^{dk-da+e}, 
  \end{equation}
  These are straightforward, case by case computations.  
  
  \eqref{eqOddEvenSeriesInitEmpty} follows from the fact that 
  \begin{equation*}
    {}_dc\alpha[f]_n(0; q) (q^{-n})^{da+f}
    = {}_dt\alpha_n(0; q) (q^{-n})^{da}
    = \begin{cases}
	1 & \textrm{ if } n = 0 \\
	0 & \textrm{ if } n > 0, 
      \end{cases}
  \end{equation*}
  and 
  \begin{equation*}
    {}_dc\beta[f]_n(0; q) (xq^{n+1})^{da+f}
    = {}_dt\beta_n(0; q) (xq^{n+1})^{da}
    = 0.  
  \end{equation*}
  Finally, \eqref{eqOddEvenSeriesInitZero} is a consequence of 
  \begin{equation*}
    {}_dc\alpha[0]_n(x;q) = {}_dc\beta[0]_n(x; q), 
  \end{equation*}
  and 
  \begin{equation*}
    {}_dt\alpha_n(x;q) = {}_dt\beta_n(x; q).   
  \end{equation*}
  Unless $e = d$ or $2e = d$, the penultimate equation fails.  
  This is the only place in the proof where we need that restriction.  
\end{proof}

\begin{proof}[proof of Theorem \ref{thmOddEvenFull}]
  We prove the case $f<e$ in the former identity.  
  The other cases are completely analogous.  
  First, 
  \[
    {}_dB_{dk+e, da+f}(n) = \sum_{m \geq 0} {}_db_{dk+e, da+f}(m,n)
  \]
  since both sides enumerate the same kind of partitions.  
  On the right hand side they are classified according to the number of parts.  
  by Lemma \ref{lmOddEvenFull}, 
  \[
    \sum_{n \geq 0} {}_dB_{dk+e, da+f}(n) q^n 
    = \sum_{m, n \geq 0} {}_db_{dk+e, da+f}(m,n) q^n
    = C_{dk+e, da+f}(1; q) 
  \]
  \begin{align*}
    = & \frac{ (q^d; q^{2d})_\infty }{ (q^d; q^d)_\infty (q, q^2)_\infty } \frac{1}{(1 - q^d)} \\
    & \times \left\{ 
      \sum_{n \geq 0} (-1)^n q^{ (2dk+2e+d) \binom{n+1}{2} } q^{-n(e-f)} q^{-n(da+f)}
      \left[ (1 - q^{d+f-e} ) + q^{nd} (q^{d+f-e} - q^d) \right]
    \right. \\
      & - \left.
      \sum_{n \geq 0} (-1)^n q^{ (2dk+2e+d) \binom{n+1}{2} } q^{(n+1)(e-f)} q^{(n+1)(da+f)}
      \left[ (1 - q^{d+f-e} ) + q^{-(n+1)d} (q^{d+f-e} - q^d) \right]
    \right\}.  
  \end{align*}
  Negate the index in the first sum, and shift it in the second.  
  Note that $\binom{n+1}{2} = \binom{-n}{2}$.  
  \begin{align*}
    = & \frac{ (q^d; q^{2d})_\infty }{ (q^d; q^d)_\infty (q, q^2)_\infty } \frac{1}{(1 - q^d)} \\
    & \times \left\{ 
      \sum_{n \leq 0} (-1)^n q^{ (2dk+2e+d) \binom{n}{2} } q^{n(e-f)} q^{n(da+f)}
      \left[ (1 - q^{d+f-e} ) + q^{-nd} (q^{d+f-e} - q^d) \right]
    \right. \\
      & + \left.
      \sum_{n > 0} (-1)^n q^{ (2dk+2e+d) \binom{n}{2} } q^{n(e-f)} q^{n(da+f)}
      \left[ (1 - q^{d+f-e} ) + q^{-nd} (q^{d+f-e} - q^d) \right]
    \right\}
  \end{align*}
  \begin{align*}
    = & \frac{ (q^d; q^{2d})_\infty }{ (q^d; q^d)_\infty (q, q^2)_\infty } \frac{ (1 - q^{d+f-e} ) }{(1 - q^d)}
      \sum_{n = -\infty}^{\infty} (-1)^n q^{ (2dk+2e+d) \binom{n}{2} } (q^{da+e})^n \\
    + & \frac{ (q^d; q^{2d})_\infty }{ (q^d; q^d)_\infty (q, q^2)_\infty } \frac{ (q^{d+f-e} - q^d) }{(1 - q^d)}
      \sum_{n = -\infty}^{\infty} (-1)^n q^{ (2dk+2e+d) \binom{n}{2} } (q^{d(a-1)+e})^n
  \end{align*}
  Finally, use Jacobi's triple product identity~\cite[p.15]{GR} to finish the proof.  
  \begin{align*}
    = & \frac{ (q^d; q^{2d})_\infty }{ (q^d; q^d)_\infty (q, q^2)_\infty } \frac{ (1 - q^{d+f-e} ) }{(1 - q^d)}
      \left( q^{da+e}, q^{2dk - da + d + e }, q^{2dk+2e+d}; q^{2dk+2e+d} \right)_\infty \\
    + & \frac{ (q^d; q^{2d})_\infty }{ (q^d; q^d)_\infty (q, q^2)_\infty } \frac{ (q^{d+f-e} - q^d) }{(1 - q^d)}
      \left( q^{d(a-1)+e}, q^{2dk - da + 2d + e }, q^{2dk+2e+d}; q^{2dk+2e+d} \right)_\infty
  \end{align*}
\end{proof}

\section{Construction of the Series}
\label{secConstruction}

In \cite{AndrewsPosetRR}, Andrews proves that 
for $1 \leq a \leq k$,
\begin{equation*}
  \sum_{m,n \geq 0} {}_1b_{a,k}(m,n) x^m  q^n = Q_{k,a}(x;q) 
  := \sum_{n \geq 0} \alpha_n(x;q) (q^{-n})^a + \beta_n(x;q) (xq^{n+1})^a, 
\end{equation*}
where 
\[
  \alpha_n(x; q) = - \beta(x;q) 
  = \frac{ (-1)^n x^{kn} q^{(2k+1)\binom{n+1}{2} } }{ (q; q)_n (xq^{n+1}; q)_\infty }. 
\]
by showing that both sides satisfy the same functional equations 
with the same initial conditions.  
This proof of Rogers-Ramanujan-Gordon identities was first given in~\cite{AndrewsRRGanalytic}, 
and the series $Q_{k,a}(x;q)$ first appeared in~\cite{RR-Qappearance}.  
But the proofs here resemble those in \cite{AndrewsPosetRR} than the others.  
The functional equations and initial conditions are 
\eqref{eqEvenToOddSeriesrec}-\eqref{eqOddEvenSeriesInitZero}
for $d = 1$, in which case ${}_1C_{k+1, a+1}(x; q) = {}_1T_{k+1, a+1}(x; q)$.  
The key computations in \cite{AndrewsPosetRR} are 
\begin{equation}
\label{eqDeltaAlphaOriginal}
  \alpha_n(x;q) (q^{-n})^a - \alpha_n(x;q) (q^{-n})^{a-1}
  = (xq)^{a-1} \beta_{n-1}(xq; q) (xq^{n+1})^{k-a+1}, 
\end{equation}
and 
\begin{equation}
\label{eqDeltaBetaOriginal}
  \beta_n(x;q) (xq^{n+1})^a - \beta_n(x;q) (xq^{n+1})^{a-1}
  = (xq)^{a-1} \alpha_{n}(xq; q) (q^{-n})^{k-a+1}, 
\end{equation}
yielding
\begin{equation*}
\label{eqQRecurrence}
  Q_{k,a}(x; q) - Q_{k,a-1}(x;q) = (xq)^{a-1} Q_{k, k-a+1}(xq; q).  
\end{equation*}
Obviously, $Q_{k,a}(0, q) = 1$.  
The fact that $\alpha_n(x; q) = -\beta_n(x;q)$ guarantees
$Q_{k, 0}(x;q) = 0$, and the proof is complete.  

Inspired by this, we make some initial guesses and construct the series from scratch.  
To find extensions of Rogers-Ramanujan-Gordon identities, 
the conventional approach is to show that some variant of $Q_{k,a}(x; q)$ solves the 
functional equations that derive from the combinatorial descriptions, 
thus reconciling two series to get an identity.  
The method here is linear in the sense that once we have the functional equations, 
the series are constructed.  
If one so wishes, the connection to $Q_{k,a}(x; q)$ can be made.  

Given definitions of ${}_db_{dk+e, da+f}(m, n)$ and ${}_d\overline{b}_{dk+e, da+f}(m, n)$, 
the equations \eqref{eqEvenToOddSeriesrec}-\eqref{eqOddEvenSeriesInitZero} follow.  
We guess the generating functions to be of the form 
\begin{equation*}
  {}_dC_{dk+e, da+f}(x; q) 
  := \displaystyle \sum_{n \geq 0} {}_dc\alpha_n(x; q) \; \left( q^{-n} \right)^{da+f} 
    + {}_dc\beta_n(x; q) \; \left( xq^{n+1} \right)^{da+f}, 
\end{equation*}
\begin{equation*}
  {}_dT_{dk+e, da}(x; q) 
  := \displaystyle \sum_{n \geq 0} {}_dt\alpha_n(x; q) \; \left( q^{-n} \right)^{da} 
    + {}_dt\beta_n(x; q) \; \left( xq^{n+1} \right)^{da}.  
\end{equation*}
One heuristic reason for having two types of terms in the series is that 
in recurrences \eqref{eqDeltaAlphaOriginal} and \eqref{eqDeltaBetaOriginal}, 
the $a$'s in the exponents on both sides simplify, 
and a double recurrence that define $\alpha_n(x; q)$ and $\beta_n(x; q)$ 
as nice infinite products is obtained.  

However, a few trials to find similar recurrences among 
${}_dc\alpha_n(x; q)$, ${}_dt\alpha_n(x; q)$,
${}_dc\beta_n(x; q)$, and ${}_dt\beta_n(x; q)$'s 
will lead to inconsistent equations, 
indicating that more freedom is needed.  
The remedy is the fact that there are exactly $d$ functional equations 
in \eqref{eqEvenToOddSeriesrec} that have ${}_dT_{dk+e, da}(xq; q)$ on the right hand side.  
Therefore, we use separate ${}_dc\alpha[f]_n(x; q)$ and \linebreak[4]
${}_dc\beta[f]_n(x; q)$'s, 
depending on the residue class $f\pmod{d}$, hence the forms of series as in 
\eqref{eqEvenSeries} and \eqref{eqOddSeries}.  

In order to achieve \eqref{eqEvenToOddSeriesrec} and \eqref{eqOddToEvenSeriesrec}, 
we require \eqref{eqDeltaAlphaEvenToOdd} - \eqref{eqDeltaBetaOddToEven}.  
We keep in mind that $f$ is treated as a residue class $\pmod{d}$, so 
${}_dc\alpha[0]_n(x; q) = {}_dc\alpha[d]_n(x; q)$.  
After simplifications, we have 
\begin{equation*}
  q^{-n} {}_dc\alpha[f]_n(x; q) - {}_dc\alpha[f-1]_n(x; q)
  = \begin{cases}
      (xq^{n+1})^{dk+d+f-1} {}_dt\beta_{n-1}(xq; q) & \textrm{ if } f \leq e \\
      (xq^{n+1})^{dk+f-1} {}_dt\beta_{n-1}(xq; q) & \textrm{ if } f > e, 
    \end{cases} 
\end{equation*}
\begin{equation*}
  xq^{n+1} {}_dc\beta[f]_n(x; q) - {}_dc\beta[f-1]_n(x; q)
  = \begin{cases}
      (q^{-n})^{dk+d+f-1} {}_dt\alpha_{n}(xq; q) & \textrm{ if } f \leq e \\
      (q^{-n})^{dk+f-1} {}_dt\alpha_{n}(xq; q) & \textrm{ if } f > e, 
    \end{cases} 
\end{equation*}
\begin{equation*}
  {}_dt\alpha_n(x; q)
  = \frac{ (xq^{n+1})^{dk+e} }{ (q^{-dn} - 1) } {}_dc\beta[e]_{n-1}(xq; q), 
\end{equation*}
and
\begin{equation*}
  {}_dt\beta_n(x; q)
  = \frac{ (q^{-n})^{dk+e} }{ ( (xq^{n+1})^d - 1) } {}_dc\alpha[e]_{n}(xq; q).  
\end{equation*}
The former pair of systems of equations 
can be rewritten using matrices.  
\begin{align*}
  \begin{bmatrix}
    q^{-n} & 0 & \cdots & 0 & 1 \\
    -1 & q^{-n} & \cdots & 0 & 0 \\
    \vdots & & \ddots & & \\
    0 & 0 & & q^{-n} & 0 \\
    0 & 0 & & -1 & q^{-n} 
  \end{bmatrix}
  & \begin{bmatrix}
    {}_dc\alpha[0]_n(x; q) \\ \vdots \\ {}_dc\alpha[d-1]_n(x; q)
  \end{bmatrix} \\
  & = (xq^{n+1})^{dk} {}_dt\beta_{n-1}(xq; q) 
  \begin{bmatrix}
   1 \cdot (xq^{n+1})^d \\ \vdots \\ (xq^{n+1})^{e-1} \cdot (xq^{n+1})^d \\ 
   (xq^{n+1})^{e} \cdot 1 \\ \vdots \\ (xq^{n+1})^{d-1} \cdot 1
   \end{bmatrix}
\end{align*}
\begin{align*}
\label{eqDeltaBetaEvenToOdd-matrixIb}
  \begin{bmatrix}
    xq^{n+1} & 0 & \cdots & 0 & 1 \\
    -1 & xq^{n+1} & \cdots & 0 & 0 \\
    \vdots & & \ddots & & \\
    0 & 0 & & xq^{n+1} & 0 \\
    0 & 0 & & -1 & xq^{n+1} 
  \end{bmatrix}
  & \begin{bmatrix}
    {}_dc\beta[0]_n(x; q) \\ \vdots \\ {}_dc\beta[d-1]_n(x; q)
  \end{bmatrix} \\
  & = (q^{-n})^{dk} {}_dt\alpha_{n}(xq; q) 
  \begin{bmatrix}
   1 \cdot (q^{-n})^d \\ \vdots \\ (q^{-n})^{e-1} \cdot (q^{-n})^d \\ 
   (q^{-n})^{e} \cdot 1 \\ \vdots \\ (q^{-n})^{d-1} \cdot 1
   \end{bmatrix} 
\end{align*}
The displayed matrices have inverses
\begin{equation*}
\label{eqMatrixInversesAlpha}
  \frac{1}{ (1 - q^{dn}) }
  \begin{bmatrix}
   q^n & q^{dn} & q^{(d-1)n} & \cdots & & \\
   q^{2n} & q^n & q^{dn} & \cdots & & \\
   \vdots & & & \ddots & & \\
   & & & & q^{2n} & q^n
  \end{bmatrix}, 
\end{equation*}
and 
\begin{equation*}
\label{eqMatrixInversesBeta}
  \frac{ 1 }{ ((xq^{n+1})^d - 1) }
  \begin{bmatrix}
    (xq^{n+1})^{d-1} & 1 & (xq^{n+1}) & \cdots & \\
    (xq^{n+1})^{d-2} & (xq^{n+1})^{d-1} & 1 & \cdots & \\
    \vdots & & \ddots & \\
    1 & (xq^{n+1}) & \cdots & & (xq^{n+1})^{d-1}
  \end{bmatrix}, 
\end{equation*}
respectively.  Multiplying both sides of the matrix equations by the respective inverse matrix, 
we obtain the following recurrences.  
\begin{align*}
  {}_dc\alpha[f]_n(x; q)  = & \frac{ (xq^{n+1})^{dk} {}_dt\beta_{n-1}(xq; q) }{ (1 - q^{nd}) } \notag \\ & \\ 
   & \times \begin{cases}
      \frac{ (xq)^e - (xq)^{d+f} }{ (1 - xq) } q^{ n(d+f) } 
      + \frac{ (xq)^{d+f} - (xq)^{d+e} }{ (1 - xq) } q^{ n(2d+f) } 
      & \textrm{ if } f < e \\ & \\ 
      \frac{ (xq)^e - (xq)^{d+e} }{ (1 - xq) } q^{ n(d+e) }
      & \textrm{ if } f = e \\ & \\ 
      \frac{ (xq)^f - (xq)^{d+e} }{ (1 - xq) } q^{ n(d+f) } 
      + \frac{ (xq)^{e} - (xq)^{f} }{ (1 - xq) } q^{ nf } 
      & \textrm{ if } f > e \\
    \end{cases}
\end{align*}

\begin{align*}
  {}_dc\beta[f]_n(x; q) = & \frac{ (q^{-n})^{dk} {}_dt\alpha_{n}(xq; q) }{ ( (xq^{n+1})^{d} - 1 ) } \\ & \\ 
  & \times \begin{cases}
      \frac{ (xq)^{e-f} - (xq)^{d} }{ (1 - xq) } q^{ -nf } 
      + \frac{ 1 - (xq)^{e-f} }{ (1 - xq) } q^{ -n(d+f) } 
      & \textrm{ if } f < e \\ & \\ 
      \frac{ 1 - (xq)^{d} }{ (1 - xq) } q^{ -ne }
      & \textrm{ if } f = e \\ & \\ 
      \frac{ 1 - (xq)^{d-f+e} }{ (1 - xq) } q^{ -nf } 
      + \frac{ (xq)^{d-f+e} - (xq)^{d} }{ (1 - xq) } q^{ -n(f-d) } 
      & \textrm{ if } f > e \\
    \end{cases}
\end{align*}

By means of iteration, we first obtain 
\begin{equation*}
  {}_dt\alpha_n(x; q) = \frac{ (-1)^n x^{(dk+e)n} q^{(n^2+n)(dk+e)} q^{d\binom{n+1}{2}} ( (xq^2)^d; q^{2d})_n }
		    { (q^d; q^d)_n ((xq^{n+1})^d; q^d)_n ( xq^2; q^2)_n }
		 \quad {}_dt\alpha_0(xq^{2n}; q), 
\end{equation*}
\begin{equation*}
  {}_dt\beta_n(x; q) = \frac{ (-1)^n x^{(dk+e)n} q^{(n^2+n)(dk+e)} q^{d\binom{n+1}{2}} ( (xq^2)^d; q^{2d})_n }
		    { (q^d; q^d)_n ((xq^{n+1})^d; q^d)_n ( xq^2; q^2)_n }
		 \quad {}_dt\beta_0(xq^{2n}; q), 
\end{equation*}
\begin{align*}
  {}_dc\alpha[f]_n(x; q) = & - \, 
    \frac{ (-1)^n x^{(dk+e)n} q^{(n^2+n)(dk+e)} q^{d \binom{n+1}{2}} ((xq)^{d}; q^{2d})_n }
      { (q^d; q^d)_n ((xq^{n+1})^{d}; q^{d})_{n-1} (xq; q^2)_n }
     \; {}_dt\beta_0(xq^{2n-1}; q) \\ & \\
   & \times
   \begin{cases}
    \frac{ 1 - (xq)^{d+f-e} }{ 1 - (xq)^{d} } q^{n(f-e)}
    + \frac{ (xq)^{d+f-e} - (xq)^{d} }{ 1 - (xq)^{d} } q^{n(d+f-e)} 
    & \textrm{ if } f < e \\
    & \\
    1 & \textrm{ if } f = e \\ 
    & \\
    \frac{ (xq)^{f-e} - (xq)^{d} }{ 1 - (xq)^{d} } q^{n(f-e)}
    + \frac{ 1 - (xq)^{f-e} }{ 1 - (xq)^{d} } q^{n(f-d-e)} 
    & \textrm{ if } f > e, 
   \end{cases} 
\end{align*}
and 
\begin{align*}
  {}_dc\beta[f]_n(x; q) = & - \, 
    \frac{ (-1)^n x^{(dk+e)n} q^{(n^2+n)(dk+e)} q^{d \binom{n+1}{2}} ((xq)^{d}; q^{2d})_{n+1} }
      { (q^d; q^d)_n ((xq^{n+1})^{d}; q^{d})_{n+1} (xq; q^2)_{n+1} }
     \; {}_dt\alpha_0(xq^{2n+1}; q) \\ & \\
   & \times
   \begin{cases}
    \frac{ (xq)^{e-f} - (xq)^{d} }{ 1 - (xq)^{d} } q^{n(e-f)}
    + \frac{ 1 - (xq)^{e-f} }{ 1 - (xq)^{d} } q^{n(e-d-f)} 
    & \textrm{ if } f < e \\
    & \\
    1 & \textrm{ if } f = e \\ 
    & \\
    \frac{ 1 - (xq)^{d-f+e} }{ 1 - (xq)^{d} } q^{n(e-f)}
    + \frac{ (xq)^{d-f+e} - (xq)^{d} }{ 1 - (xq)^{d} } q^{n(e-f+d)} 
    & \textrm{ if } f > e.  
   \end{cases} 
\end{align*}

The next set of constraints to meet is \eqref{eqOddEvenSeriesInitZero}.  
For this, we introduce part of our next assumption.  
\begin{align*}
  {}_dt\alpha_n(x; q) & = - {}_dt\beta_n(x; q) \\
  {}_dc\alpha[0]_n(x; q) & = -{}_dc\beta[0]_n(x; q) 
\end{align*}
Observe that this is a sufficient but not necessary condition for \eqref{eqOddEvenSeriesInitZero}.  
It brings
\begin{equation*}
  {}_dt\alpha_0(xq^{2n}; q) = - {}_dt\beta_0(xq^{2n}; q), 
\end{equation*}

\begin{align*}
  & {}_dt\beta_0(xq^{2n-1}; q) \left\{ \frac{ 1 - (xq)^{d-e} }{ 1 - (xq)^d } q^{-ne} 
						+ \frac{ (xq)^{d-e} - (xq)^{d} }{ 1 - (xq)^d } q^{n(d-e)}\right\} \notag \\
  & \\
  & = - {}_dt\alpha_0(xq^{2n+1}; q) 
    \frac{ (1 - x^d q^{(2n+1)d}) }{ (1 - x^{d} q^{2nd}) (1 - x^{d} q^{(2n+1)d}) (1 - x q^{2n+1}) } \notag \\
  & \\
  & \times \left\{ \frac{ (xq)^e - (xq)^{d} }{ 1 - (xq)^d } q^{ne} 
		 + \frac{ 1 - (xq)^{e} }{ 1 - (xq)^d } q^{n(e-d)}\right\}
\end{align*}

The second part of our assumption is that the terms in the curly braces 
on either side of the above equation must match one for one.  
This is only possible when $e = d$ or $2e = d$.  
In the former possibility, the first fractions vanish and the seconds match, 
and in the latter the first fraction of either side match the second of the opposite side.  
In either case, we obtain
\begin{equation*}
  {}_dt\alpha_0(x) = \frac{ ( (xq^2)^d; q^{2d})_\infty }{ ( (xq)^d; q^{d})_\infty ( xq^2; q^{2})_\infty } 
  \; {}_dt\alpha_0(0) \textrm{, and}
\end{equation*}
\begin{equation*}
  {}_dt\beta_0(x) = - \frac{ ( (xq^2)^d; q^{2d})_\infty }{ ( (xq)^d; q^{d})_\infty ( xq^2; q^{2})_\infty } 
  \; {}_dt\alpha_0(0) \textrm{.  }
\end{equation*}

Finally, \eqref{eqOddEvenSeriesInitEmpty} gives ${}_dt\alpha_0(0) = 1$, 
and the construction of the series is complete.  

\section{Future Research}
\label{secFutureResearch}

We indicate some problems for further research along these lines.

First of all, the results have quite a lot of missing cases.  
Theorem \ref{thmOddEvenFull} is valid only for $e = d$ or $2e = d$, 
one would like to have identities for $e = 1, \ldots, d$.  
The identities are simply wrong for other $e$'s.  
The reason for this is that the initial conditions \eqref{eqOddEvenSeriesInitZero}
are not easily met unless $e = d$ or $2e = d$.  
If one can find functions $A(x; q)$ and $B(x; q)$ such that
\begin{equation*}
  \sum_{n \geq 0} {}_dc\alpha[0]_n(x; q) B(xq^{2n-1}; q) - {}_dc\beta[0]_n(x; q) A(xq^{2n+1}; q) = 0
\end{equation*}
and
\begin{equation*}
  \sum_{n \geq 0} {}_dt\alpha_n(x; q) A(xq^{2n}; q) - {}_dt\beta_n(x; q) B(xq^{2n}; q) = 0
\end{equation*}
at the same time, with $A(0; q) = B(0; q) = 1$, the construction still works.  
However, as indicated in section \ref{secConstruction}, 
we cannot make either series telescope in the sense that 
\[
  {}_dc\alpha[0]_n(x; q) B(xq^{2n-1}; q) = {}_dc\beta[0]_n(x; q) A(xq^{2n+1}; q)
\]
and 
\[
  {}_dt\alpha_n(x; q) A(xq^{2n}; q) - {}_dt\beta_n(x; q) B(xq^{2n}; q).  
\]
As one easily verifies, this leads to inconsistencies unless $e = d$ or $2e = d$.  
Thus, even if we have series for the missing cases, 
Jacobi's triple product identity may not be readily applicable, 
and the identities may not be as nice as Theorem \ref{thmOddEvenFull}.  
In order for the initial conditions to work, one may need bibasic series machinery, 
because ${}_dc\alpha[f]_n(x, q)$'s etc. are bibasic terms.  

Another issue with the construction in section \ref{secConstruction} is that it takes too long by hand.  
The nature of computations indicate that most of the process can be automated.  
A computer program which takes descriptions of various partitions as inputs, 
and producing series as generating functions will be highly valuable.  
Then, a whole bunch of Rogers-Ramanujan type theorems may be obtained effortlessly.  

The construction with minimal twists as necessary proves many well-known results in
literature such as Rogers-Ramanujan-Gordon theorem for overpartitions~\cite{Lvj-Gordon, Chen-RRGovptn}, 
Bressoud's generalization of Rogers-Ramanujan-Gordon identities for all moduli~\cite{Bres-RRGallmod}, 
Bressoud's theorem for overpartitions~\cite{Chen-BrOvptn}.  
These shall be demonstrated in separate notes.  

With a little more aid of linear algebra, 
the construction works well with partititons where $f_i + f_{i+1} + \cdots + f_{i+d-1} < k$,
plus some conditions on $f_1, \ldots, f_{d-1}$
up to the point of verifying the initial conditions.  
The obtained series contain middle $q-$multinomial coefficients when $d > 2$
(for instance $\begin{bmatrix} 2n+1 \\ n+1, n \end{bmatrix}$ comes into stage when $d = 3$, 
$\begin{bmatrix} 3n \\ n, n, n \end{bmatrix}$ when $d = 4$ etc), 
rendering the series product identities unfriendly.  
Notice that for $d = 3$ this resembles Schur's partition theorem~\cite{Schur-PtnThm} 
without the condition on multiples of three.  

Another open problem is to adapt the construction so that 
it works with partitions with multiplicity conditions
for parts that are two or more apart, 
such as G\"{o}llnitz-Gordon identities~\cite{Gollnitz, Gordon-GG}, 
or Schur's partition theorem~\cite{Schur-PtnThm}.  
The main challenge here is to guess the form of the terms in the series.  

Finally, unless $d = 1$ or $d = 2$, 
the Gordon-marking~\cite{Kur} does not help to find multiple series 
as alternative generating functions as in the case of 
Andrews-Gordon identities~\cite{Andrews-PNAS} 
or generalizations such as~\cite{AndrewsParity, Bres-RRGallmod}.  
Can one find multiple series with all positive coefficients as generating functions for $d > 2$?  



\bibliographystyle{amsplain}

\end{document}